\documentclass[12pt,fleqn,leqno]{amsart}
\usepackage{amsfonts,amssymb}
\usepackage{mathrsfs}
\usepackage{enumitem}

\usepackage[svgnames]{xcolor}
\usepackage[colorlinks,linkcolor=blue,citecolor=Green]{hyperref}
\usepackage{titlesec}
\setlength{\topmargin}{.375in}
\setlength{\textheight}{8in}
\setlength{\oddsidemargin}{.375in}
\setlength{\evensidemargin}{.375in}
\setlength{\textwidth}{5.75in}

\titleformat{\section}[block]
 {\bfseries}
 {\thesection.}
 {\fontdimen2\font}
 {}

\newcommand{\periodafter}[1]{#1.}

\titleformat{\subsection}[runin]
 {\bfseries}
 {\thesubsection.}
 {\fontdimen2\font}
 {\periodafter}

\setlist{noitemsep}
\setenumerate{labelindent=\parindent,label=\upshape{(\alph*)}}


\newtheorem{theorem}{Theorem}[section]
\newtheorem{corollary}[theorem]{Corollary}
\newtheorem{lemma}[theorem]{Lemma}
\newtheorem{proposition}[theorem]{Proposition}

\theoremstyle{definition}

\newtheorem{question}{Question}

\renewcommand{\emptyset}{\varnothing}
\numberwithin{equation}{section}

\DeclareMathOperator{\uhr}{\upharpoonright}

\DeclareMathAlphabet{\mathpzc}{OT1}{pzc}{m}{it}
\DeclareMathOperator{\sel}{\mathpzc{V\mkern-5mu_{cs}}}
\DeclareMathOperator{\cut}{\mathpzc{ct}}
\DeclareMathOperator{\noncut}{\mathpzc{nct}}

\overfullrule=1mm

\begin{document}

\author{Valentin Gutev}
  \address{Department of Mathematics, Faculty of Science, University of
     Malta, Msida MSD 2080, Malta}

\email{\href{mailto:valentin.gutev@um.edu.mt}{valentin.gutev@um.edu.mt}}

\subjclass[2010]{54B20, 54C65, 54D05, 54D30, 54F05, 54F65}

\keywords{Vietoris topology, continuous selection, weak selection,
  weakly orderable space, weakly cyclically orderable space}

\title{Hyperspace Selections Avoiding Points}

\begin{abstract}
  In this paper, we deal with a hyperspace selection problem in the
  setting of connected spaces. We present two solutions of this
  problem illustrating the difference between selections for the
  nonempty closed sets, and those for the at most two-point sets. In
  the first case, we obtain a characterisation of compact orderable
  spaces. In the latter case --- that of selections for at most
  two-point sets, the same selection property is equivalent to the
  existence of a ternary relation on the space, known as a cyclic
  order, and gives a characterisation of the so called weakly
  cyclically orderable spaces.
\end{abstract}

\date{\today}
\maketitle

\section{Introduction}

All spaces in this paper are infinite Hausdorff topological
spaces. For a space $X$, let $\mathscr{F}(X)$ be the set of all
nonempty closed subsets of $X$. Usually, we endow $\mathscr{F}(X)$
with the \emph{Vietoris topology} $\tau_V$, and call it the
\emph{Vietoris hyperspace} of $X$. Recall that $\tau_V$ is generated
by all collections of the form
\[
\langle\mathscr{V}\rangle = \left\{S\in \mathscr{F}(X) : S\subset
  \bigcup \mathscr{V}\ \ \text{and}\ \ S\cap V\neq \emptyset,\
  \hbox{whenever}\ V\in \mathscr{V}\right\},
\]
where $\mathscr{V}$ runs over the finite families of open subsets of
$X$. In the sequel, any subset $\mathscr{D}\subset \mathscr{F}(X)$
will carry the relative Vietoris topology as a subspace of the
hyperspace $(\mathscr{F}(X),\tau_V)$.  A map $f:\mathscr{D}\to X$ is a
\emph{selection} for $\mathscr{D}$ if $f(S)\in S$ for every
$S\in\mathscr{D}$. A selection $f:\mathscr{D}\to X$ is
\emph{continuous} if it is continuous with respect to the relative
Vietoris topology on $\mathscr{D}$, and we use $\sel[\mathscr{D}]$ to
denote the set of all \emph{Vietoris continuous selections} for
$\mathscr{D}$.\medskip

A space $X$ is \emph{orderable} (or \emph{linearly ordered}) if it is
endowed with the open interval topology $\mathscr{T}_\leq$ generated
by some linear order $\leq$ on $X$, called \emph{compatible} for
$X$. Subspaces of orderable spaces are not necessarily orderable, they
are termed \emph{suborderable}. A space $X$ is \emph{weakly orderable}
if there exists a coarser orderable topology on $X$, i.e.\ if there
exists a linear order $\leq$ on $X$ (called \emph{compatible} for $X$)
such that $\mathscr{T}_\leq\subset\mathscr{T}$, where $\mathscr{T}$ is
the topology of $X$. The weakly orderable spaces were introduced by
Eilenberg \cite{eilenberg:41} under the name of ``ordered''
topological spaces, and are often called ``Eilenberg orderable''. They
were called ``weakly orderable'' in
\cite{zbMATH03379800,zbMATH03355968,dalen-wattel:73,mill-wattel:81},
and in \cite{mill-wattel:81} was also proposed to abbreviate them as
\emph{KOTS}. \medskip

It was shown in \cite[Proposition 5.1]{MR3712970} that if $X$ is a
connected space and $\mathscr{F}(Z)$ has a continuous selection for
every connected subset $Z\subset X$ with $|X\setminus Z|\leq 1$, then
$X$ is compact and orderable.  However, this selection property
includes the special case of $Z=X$, i.e.\ that $\mathscr{F}(X)$ itself
has a continuous selection. Here, we are going to show that this
result is valid without explicitly requiring that $\mathscr{F}(X)$ has
a continuous selection, see Theorem
\ref{theorem-Sel-Cyclic-v2:1}. Based on this, we will obtain the
following characterisation of connected compact orderable spaces.

\begin{theorem}
  \label{theorem-sa_points-v10:1}
  A connected space $X$ is compact and orderable if and only if
  $\sel[\mathscr{F}(X\setminus \{p\})]\neq \emptyset$, for each
  $p\in X$.
\end{theorem}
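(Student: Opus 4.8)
The plan is to treat the two implications separately. The forward implication --- that a connected compact orderable $X$ satisfies $\sel[\mathscr{F}(X\setminus\{p\})]\neq\emptyset$ for every $p\in X$ --- I would prove by an explicit construction. The reverse implication carries the real weight, and I would derive it from Theorem \ref{theorem-Sel-Cyclic-v2:1}, the strengthening of \cite[Proposition 5.1]{MR3712970} announced above, which no longer presupposes a continuous selection on $\mathscr{F}(X)$ itself.

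For the ``only if'' part, fix a linear order $\leq$ compatible with the topology of $X$. Being compact and orderable, $X$ is Dedekind complete and has a least and a greatest element, and being connected it has no jumps. Fix $p\in X$ and put $L=\{x\in X:x<p\}$ and $R=\{x\in X:x>p\}$. Since $\{x\in X:x\le p\}$ and $\{x\in X:x\ge p\}$ are closed in $X$, the sets $L$ and $R$ are clopen in $X\setminus\{p\}$ (one of them possibly empty) and partition it. I would define $g\colon\mathscr{F}(X\setminus\{p\})\to X\setminus\{p\}$ by $g(S)=\min S$ when $S\cap L\neq\emptyset$ and $g(S)=\max S$ when $S\subset R$. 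First one checks that $g$ is well defined: if $S\cap L\neq\emptyset$ then $\inf S<p$, so $\inf S\in X\setminus\{p\}$, and as $S$ is closed in $X\setminus\{p\}$ this infimum lies in $S$; the case $S\subset R$ is symmetric via $\sup S>p$. Thus $g$ is a selection. Next one checks continuity, separately on the two complementary clopen subsets $\{S:S\cap L\neq\emptyset\}$ and $\langle\{R\}\rangle$ of $\mathscr{F}(X\setminus\{p\})$, on which $g$ agrees with $\min$, respectively $\max$. This is a routine Vietoris argument: for $S$ with $\min S=m$ and a basic neighbourhood $(a,b)\cap(X\setminus\{p\})$ of $m$, the Vietoris neighbourhood $\langle\{\{x\in X\setminus\{p\}:x>a\},\,(a,b)\cap(X\setminus\{p\})\}\rangle$ of $S$ is mapped into $(a,b)$ by $g$, with the obvious variants for one-sided neighbourhoods and the dual argument for $\max$. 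Hence $g\in\sel[\mathscr{F}(X\setminus\{p\})]$.

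For the ``if'' part, assume $X$ is connected and $\sel[\mathscr{F}(X\setminus\{p\})]\neq\emptyset$ for every $p\in X$. A fortiori this holds for every $p$ with $X\setminus\{p\}$ connected, so Theorem \ref{theorem-Sel-Cyclic-v2:1} applies and yields that $X$ is compact and orderable. This is the step I expect to be the main obstacle, and it is precisely what Theorem \ref{theorem-Sel-Cyclic-v2:1} is designed to handle: one must extract a compatible order on $X$, together with compactness, from selections living only on the punctured hyperspaces, with no selection on $\mathscr{F}(X)$ available as a foothold. It is also the step where the connectedness of $X$ is indispensable --- the convergent sequence $X=\{0\}\cup\{1/n:n\ge1\}$ is compact, orderable and disconnected, yet $\sel[\mathscr{F}(X\setminus\{p\})]\neq\emptyset$ for every $p$ (use the natural ``minimum'' selection on each clopen summand) --- so connectedness must be used essentially inside Theorem \ref{theorem-Sel-Cyclic-v2:1}. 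By contrast the forward implication is routine, the only delicate point being the continuity of $g$ at the interface of its two cases, which the clopen splitting $X\setminus\{p\}=L\cup R$ disposes of.
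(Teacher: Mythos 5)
Your ``only if'' direction is fine, and in fact it is a legitimate alternative to the paper's route: the paper does not construct the selection by hand but obtains it from the Nogura--Shakhmatov characterisation (Theorem \ref{theorem-Sel-Cyclic-v4:2}) together with the extension-through-$p$ device of Proposition \ref{proposition-sa_points-v10:1}, splitting $X\setminus\{p\}$ into the two clopen order-pieces when $p$ is a cut point. Your direct $\min$/$\max$ construction on the clopen pieces $L$ and $R$, with the Vietoris continuity check you sketch, is correct and arguably more elementary.

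The ``if'' direction, however, has a genuine gap. Theorem \ref{theorem-Sel-Cyclic-v2:1} is stated for a connected space \emph{which has at least one noncut point}, and its hypothesis only concerns selections for $\mathscr{F}(X\setminus\{p\})$ with $p\in\noncut(X)$; it says nothing when $\noncut(X)=\emptyset$. Your phrase ``a fortiori this holds for every $p$ with $X\setminus\{p\}$ connected, so Theorem \ref{theorem-Sel-Cyclic-v2:1} applies'' silently assumes that such a $p$ exists, but connected spaces in which every point is a cut point are plentiful (the real line, for instance), so the existence of a noncut point must be \emph{derived} from the selection hypothesis rather than taken for granted. This is exactly where the paper's proof does real work: it takes two cut points $y,z\in\cut(X)$, a $y$-cut $(A,B)$ and a $z$-cut $(U,V)$ with $z\notin A$, observes that $S=A\cup\{y\}$ is a closed connected set missing $z$, and uses the restrictions of selections for $\mathscr{F}(X\setminus\{z\})$ and $\mathscr{F}(X\setminus\{y\})$ to $\mathscr{F}(S)$ and $\mathscr{F}(A)$, together with Corollary \ref{corollary-sa_points-v6:2} and weak orderability of $S$, to produce a noncut point $p\in S$ with $p\neq y$; then $X\setminus\{p\}=(S\setminus\{p\})\cup B\cup\{y\}$ is connected, so $p\in\noncut(X)$, and only at that point can Theorem \ref{theorem-Sel-Cyclic-v2:1} be invoked. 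Without an argument of this kind (using the selection hypothesis at \emph{arbitrary} points, not just noncut ones), your reduction to Theorem \ref{theorem-Sel-Cyclic-v2:1} does not go through.
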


Regarding the proper place of Theorem \ref{theorem-sa_points-v10:1},
let us explicitly remark that if $X$ is a space such that
$\mathscr{F}(Z)$ has a continuous selection for every $Z\subset X$
with $|X\setminus Z| \leq 2$, then $X$ is totally disconnected
\cite[Corollary 5.3]{MR3712970}. On the other hand, the hypothesis in
Theorem \ref{theorem-sa_points-v10:1} that $X$ is connected is
essential to conclude that it is compact.  In fact, we will obtain a
natural generalisation of the aforementioned result of
\cite{MR3712970} showing that the hyperspace selection property in
Theorem \ref{theorem-sa_points-v10:1} is possessed by a natural class
of non-compact totally disconnected spaces, see Theorem
\ref{theorem-Sel-Cyclic-v7:1} and Corollary
\ref{corollary-Sel-Cyclic-v12:1}. \medskip

Let $\mathscr{F}_2(X)=\{S\in \mathscr{F}(X): |S|\leq 2\}$. A selection
$f:\mathscr{F}_2(X)\to X$ is commonly called a \emph{weak selection}
for $X$, see the next section for a brief review of such selections.
Theorem \ref{theorem-sa_points-v10:1} is not valid in the setting of
continuous weak selections. In this case, the property is
characterising another class of connected spaces which constitutes the
second main result of this paper. In order to state it, we briefly
recall some terminology. A ternary relation $\mathbf{C}\subset X^3$ on
a set $X$ is called a \emph{cyclic} (or \emph{circular}) \emph{order}
on $X$, see Huntington \cite{zbMATH02612155,zbMATH02595096} and
\v{C}ech \cite{cech:66}, if the following conditions are satisfied:
\begin{align*}
  \left.
  \begin{gathered}
    a\neq b\neq c\neq a\\
    (a,b,c)\notin \mathbf{C}
  \end{gathered}\right\} & \iff (c,b,a)\in \mathbf{C}\\
  (a,b,c)\in \mathbf{C} &\implies (b,c,a)\in \mathbf{C} \\
  \left.
  \begin{gathered}
    (a,b,c)\in \mathbf{C}\\
    (a,c,d)\in \mathbf{C}
  \end{gathered}
  \right\} &\implies (a,b,d)\in \mathbf{C}.
\end{align*}

Several basic properties of a cyclic order can be found in \S5 of
Chapter I in \cite{cech:66}. For instance, whenever $a\in X$, a cyclic
order $\mathbf{C}$ on $X$ defines a (strict) linear order
$<_{\mathbf{C},a}$ on $X\setminus\{a\}$ by $x<_{\mathbf{C},a} y$ if
$(a,x,y)\in \mathbf{C}$. The converse is also true, and each linear
order $<$ on $X\setminus\{a\}$ defines a unique cyclic order
$\mathbf{C}$ on $X$ with $<=<_{\mathbf{C},a}$. Furthermore, each
linear order $<$ on $X$ can be extended to a cyclic order
$\mathbf{C}_<$ on $X$, see \cite[Proposition 1.6]{MR0339099}, where
$\mathbf{C}_<$ is defined by
\[
  (a,b,c)\in \mathbf{C}_< \iff
  \begin{cases}
    a\neq b\neq c\neq a\quad \text{and}\\
    a<b<c\ \ \text{or}\ \ b<c<a\ \ \text{or}\ \ c<a<b. 
  \end{cases}
\]

For a cyclic order  $\mathbf{C}$ on $X$ and $a,b\in X$, the set 
\[
  (a,b)_{<_\mathbf{C}}=\{x\in X:
  (a,x,b)\in \mathbf{C}\}\subset X\setminus\{a,b\}
\]
is called an \emph{interval} from $a$ to $b$. Evidently,
$(a,b)_{<_\mathbf{C}}=(a,a)_{<_\mathbf{C}}=\emptyset$ provided
$a=b$. Otherwise, if $a\neq b$, it was shown in \cite{cech:66} that
the linear orders $<_{\mathbf{C},a}$ and $<_{\mathbf{C},b}$ coincide
on $(a,b)_{<_\mathbf{C}}$. A space $X$ is called \emph{weakly
  cyclically orderable} \cite{MR0339099} (\emph{cyclically orderable},
in Kok's terminology \cite{MR0339099}) if there exists a cyclic order
$\mathbf{C}$ on $X$ such that all intervals $(a,b)_{<_\mathbf{C}}$,
$a, b\in X$, are open in $X$. If, moreover, these intervals form a
base for the topology of $X$, then $X$ is called \emph{cyclically
  orderable} \cite{MR0339099} (\emph{strictly cyclically orderable},
in Kok's terminology \cite{MR0339099}). Each weakly orderable space is
weakly cyclically orderable \cite[Proposition
1.6]{MR0339099}. However, an orderable space is not necessarily
cyclically orderable. As pointed out in \cite{MR0339099}, such a space
is the half-open interval $[0,1)$. In the other direction, the plane
circle is an example of a connected cyclically orderable space which
is not weakly orderable. \medskip

If $X$ is weakly cyclically orderable and $p\in X$, then
$X\setminus\{p\}$ is weakly orderable; in fact, weakly orderable with
respect to the linear order $<_{\mathbf{C},p}$ \cite[Proposition
1.7]{MR0339099}. In particular, $X\setminus\{p\}$ has a continuous
weak selection. Evidently, this is a special case of the selection
property in Theorem \ref{theorem-sa_points-v10:1}. We are now ready to
state our second main result. Namely, in this paper, we will also
prove the following theorem which is complementary to Theorem
\ref{theorem-sa_points-v10:1}.

\begin{theorem}
  \label{theorem-Sel-Cyclic-v8:1}
  A connected space $X$ is weakly cyclically orderable if and only if
  $X\setminus\{p\}$ has a continuous weak selection, for every $p\in
  X$. 
\end{theorem}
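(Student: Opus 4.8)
The plan is as follows. For the ``only if'' direction, suppose $X$ carries a cyclic order $\mathbf{C}$ all of whose intervals are open. By \cite[Proposition~1.7]{MR0339099}, $X\setminus\{p\}$ is then weakly orderable with respect to $<_{\mathbf{C},p}$ for every $p$, and a weakly orderable space always admits a continuous weak selection --- assign to $\{x,y\}$ its $<_{\mathbf{C},p}$-minimum, continuity being a routine consequence of the order-convexity of order intervals (cf.\ \cite{mill-wattel:81}). Thus $\sel[\mathscr{F}_2(X\setminus\{p\})]\neq\emptyset$ for every $p$.

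For the ``if'' direction, assume $X$ is connected and that $X\setminus\{p\}$ admits a continuous weak selection $\sigma_p$ for every $p\in X$. I would begin with the lemma that each $X\setminus\{p\}$ is, in fact, weakly orderable. The two tools are the classical fact that a connected space with a continuous weak selection is weakly orderable (cf.\ \cite{mill-wattel:81}) and the elementary observation that a closed subspace of a space with a continuous weak selection again has one, since the relative Vietoris topologies coincide. If $X\setminus\{p\}$ is connected the lemma is immediate; otherwise connectedness of $X$ forces $p$ to lie in the closure of every connected component $C$ of $X\setminus\{p\}$, so that $C\cup\{p\}=\overline{C}$ is connected, closed in $X$, and inherits a continuous weak selection from any $\sigma_q$ with $q\notin C\cup\{p\}$; hence each component, with $p$ adjoined at a single end, is weakly orderable. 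A counting argument --- a \emph{connected} weakly orderable space loses at most two connected components upon deletion of a point, which one applies inside $X\setminus\{q\}$ to subspaces of the form $C_1\cup C_2\cup\{p\}$ and their segments --- bounds the number of components of $X\setminus\{p\}$ by two, after which the one or two component orders are glued along $p$ into a compatible linear order on $X\setminus\{p\}$. I expect this lemma to be the more technical part, though none of its steps is deep.

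Next I would reduce the theorem to constructing a single cyclic order $\mathbf{C}$ on $X$ such that $<_{\mathbf{C},q}$ is a \emph{compatible} linear order on $X\setminus\{q\}$ for every $q\in X$. This suffices: for $u,v\in X$ one has $(u,v)_{<_\mathbf{C}}=\{x\in X\setminus\{u\}:x<_{\mathbf{C},u}v\}$, an open ray of the compatible order $<_{\mathbf{C},u}$, hence open in $X\setminus\{u\}$ and therefore open in $X$, so all intervals of $\mathbf{C}$ are open. To build $\mathbf{C}$, fix a base point $p_0$ and take $\mathbf{C}$ to be the cyclic extension of a suitable compatible order $<_{p_0}$ on $X\setminus\{p_0\}$ (the extension $\mathbf{C}_{<_{p_0}}$ with $<_{p_0}=<_{\mathbf{C}_{<_{p_0}},p_0}$, as recalled in the introduction). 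The choice of $<_{p_0}$ must be made with care, since $\mathbf{C}_{<_{p_0}}$ inserts $p_0$ at the ``wrap'' of $<_{p_0}$, i.e.\ between the approaches to its two extreme ends, and for the resulting cyclic order to have open intervals this wrap has to coincide with the place where $p_0$ actually sits in $X$ --- with the direction, or directions, along which $X\setminus\{p_0\}$ converges to $p_0$. If $X\setminus\{p_0\}$ is connected either of its two compatible orders does; if $p_0$ is a cut point, with $X\setminus\{p_0\}=A\cup B$, one orients the component orders so that $p_0$ is a one-sided limit at the top of $A$ and at the bottom of $B$, and takes $<_{p_0}$ to be ``$B$ followed by $A$'' --- a ``shifted'' gluing, still compatible because the far ends of $A$ and $B$, which it makes order-adjacent, are topologically separated from each other.

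The heart of the proof, and where I expect the real obstacle, is then verifying that $<_{\mathbf{C},q}$ is compatible for $X\setminus\{q\}$ for every $q\neq p_0$ --- that is, reconciling the single, globally constructed $\mathbf{C}$ with the a priori unrelated local selections $\sigma_q$. I would do this by analysing how $\mathbf{C}$ looks when viewed from $q$: passing from base point $p_0$ to base point $q$ amounts to a cyclic shift of the associated linear order; the shift in question is performed across a Dedekind gap both of whose sides converge to the deleted point $q$, and such shifts preserve compatibility; and since, by the lemma, $X\setminus\{q\}$ already carries a compatible order while the ``wrap'' of $<_{\mathbf{C},q}$ automatically sits where $q$ lies in $X$, one concludes --- using the near-uniqueness of compatible orders, applied componentwise inside $X\setminus\{q\}$ --- that $<_{\mathbf{C},q}$ is itself compatible. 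Every step preceding this verification is bookkeeping with linear orders and connected components; it is precisely here that the full strength of the hypothesis --- continuous weak selections for the complements of \emph{all} points of a connected space --- is brought to bear.
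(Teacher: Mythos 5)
The ``only if'' half is fine and matches the paper. In the ``if'' half, however, your direct construction of a cyclic order leaves unproved exactly the two points that carry the whole weight. First, when the base point is a cut point you assert that each component $C$ of $X\setminus\{p\}$ has $p$ ``adjoined at a single end'', and later that one can ``orient the component orders so that $p_0$ is a one-sided limit at the top of $A$ and at the bottom of $B$''. What you get for free is only that $\overline{C}=C\cup\{p\}$ is connected and weakly orderable; nothing in your sketch excludes the possibility that $p$ is a \emph{cut} point of $C\cup\{p\}$, i.e.\ sits in the middle of its (unique up to reversal, by Theorem \ref{theorem-Sel-Cyclic-v10:1}) compatible order, in which case no orientation exists and the glued linear/cyclic order cannot be compatible. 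This is precisely the content of the paper's Lemma \ref{lemma-Sel-Cyclic-v12:1}, whose proof is a genuine argument: assuming $q$ cuts $C=U\cup\{q\}$, one takes an $r$-cut $(A,B)$ of $C$, points $a<_A q<_A b$, and compares the order of $A$ with that of the connected set $Y=A\cup V$, using Eilenberg uniqueness to force the interval $(a,b)$ to be open in $Y$ and disjoint from $\overline{V}$, contradicting $q\in\overline{V}$. Your proposal contains no substitute for this step.

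Second, in the case $\noncut(X)=X$, the verification you yourself call ``the heart'' --- that $<_{\mathbf{C},q}$ is compatible for every $q$, equivalently that every ray of $<_{\mathbf{C},q}$ wrapping across $p_0$ is a neighbourhood of $p_0$, i.e.\ that $p_0\notin\overline{[x,y]_{<_{p_0}}}$ for ``middle'' order-intervals of $X\setminus\{p_0\}$ --- is asserted rather than proved. The phrases ``the wrap automatically sits where $q$ lies in $X$'' and ``such shifts preserve compatibility'' presuppose exactly this approach-only-from-the-extreme-ends property, and Eilenberg uniqueness applied componentwise only identifies the orders on the components of $X\setminus\{p_0,q\}$; it says nothing about how the topology near $p_0$ and $q$ relates to the wrap, which is where the selection hypothesis must actually be used. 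In the paper this burden is discharged by citing Kok's Theorem 3.18 after observing that every connected $Y\subset X\setminus\{p\}$ inherits a continuous weak selection, hence is weakly orderable and has at most two noncut points; you neither cite that theorem nor reprove it, so the ``if'' direction is incomplete at its crucial point. (A smaller repairable gap: your bound of two on the number of components of $X\setminus\{p\}$ needs an extra step producing a connected piece of a third component which avoids the auxiliary point and still clusters at $p$; it is not automatic as stated.)
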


The paper is organised as follows. The next section contains a brief
review of some results about continuous hyperspace selections in the
setting of connected spaces. Section \ref{sec:order-almost-weak}
contains a special case of Theorem \ref{theorem-sa_points-v10:1}, see
Theorem \ref{theorem-Sel-Cyclic-v2:1}, which is based on another
weaker interpretation of weak orderability. The proof of Theorem
\ref{theorem-sa_points-v10:1} is finalised in Section
\ref{sec:select-avoid-points}, while that of Theorem
\ref{theorem-Sel-Cyclic-v8:1} --- in Section
\ref{sec:weak-select-avoid}.  

\section{Connected Weakly Orderable Spaces} 
\label{sec:conn-weakly-order}

As mentioned in the Introduction, the weakly orderable spaces were
introduced by Eilenberg \cite{eilenberg:41}. In the same paper, he
obtained the following interesting result in the setting of connected
spaces.

\begin{theorem}[\cite{eilenberg:41}]
  \label{theorem-Sel-Cyclic-v10:1}
  Each connected weakly orderable space has precisely two compatible
  orders which are inverse to each other.
\end{theorem}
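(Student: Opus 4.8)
The plan is to show that a connected weakly orderable space $X$ admits exactly two compatible linear orders, and that these are reverses of one another. Let $\leq$ be one compatible order on $X$, so $\mathscr{T}_\leq\subset\mathscr{T}$. The reverse order $\geq$ is trivially also compatible, since it generates the same open-interval topology. So the content is the \emph{uniqueness} part: any compatible order is either $\leq$ or $\geq$.

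\medskip

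First I would record the elementary fact that in a connected space every nonempty proper subset has nonempty boundary, and that for a compatible order $\leq$ the order topology sits inside $\mathscr{T}$, so every open ray $(\leftarrow,a)_\leq$ and $(a,\rightarrow)_\leq$ is open in $X$. The key step is to pin down what the order-structure must look like: I claim that for a compatible order on a connected space, every point $a$ has the property that both rays $L_a=(\leftarrow,a)_\leq$ and $R_a=(a,\rightarrow)_\leq$ are open, and moreover $X$ has no jumps (if $a<b$ with nothing between them then $\{x:x\le a\}$ and $\{x:x\ge b\}$ would be a clopen partition of $X$, contradicting connectedness) and no isolated endpoints beyond possibly a global first/last point. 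Consequently the sets of the form $L_a$, as $a$ ranges over $X$, are precisely the nonempty proper open \emph{down-sets} of $X$ that are also closed off at the top, and they are totally ordered by inclusion.

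\medskip

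Now suppose $\preceq$ is a second compatible order on $X$, generating the same topology $\mathscr{T}$ via its intervals (well, $\mathscr{T}_\preceq\subset\mathscr{T}$). The crucial observation is that for a \emph{connected} space the order is recoverable from the topology together with the two-element information of "which end is which": indeed, consider a point $a$ that is neither the $\leq$-first nor $\leq$-last element. Then $X\setminus\{a\}=L_a\sqcup R_a$ is a separation of $X\setminus\{a\}$ into two nonempty relatively clopen pieces, and because $X$ is connected while $X\setminus\{a\}$ is disconnected, this is (up to swapping the two pieces) the \emph{only} way to write $X\setminus\{a\}$ as a union of two disjoint nonempty open sets — any such partition must have each piece connected, hence must coincide with $\{L_a,R_a\}$. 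The same applies to $\preceq$: the sets $L'_a,R'_a$ defined from $\preceq$ give a separation of $X\setminus\{a\}$, so $\{L'_a,R'_a\}=\{L_a,R_a\}$ as unordered pairs, for every non-endpoint $a$. Thus for each such $a$ either $\preceq$ and $\leq$ agree on "which side is below $a$" or they are opposite; I would then show, using a connectedness/continuity argument, that this choice is the same for all $a$ (the set of $a$ where the orders agree locally and the set where they disagree locally are each open in the non-endpoint locus, which is connected when $X$ is connected and has more than the endpoints), so globally $\preceq$ coincides with $\leq$ or with $\geq$, and in particular $\leq$ and $\geq$ are the only two.

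\medskip

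The main obstacle, I expect, is the careful handling of endpoints and the degenerate cases: a connected orderable-type space may have a first element, a last element, both, or neither, and one must check that the "separation is unique" argument still forces agreement near such points (e.g.\ if $a$ is the $\leq$-least point then $X\setminus\{a\}$ is connected and gives no information, so one argues instead at interior points and extends by continuity/density, using that in a connected weakly orderable space the set of non-endpoints is dense). A secondary technical point is that $\preceq$ is only assumed \emph{weakly} compatible ($\mathscr{T}_\preceq\subset\mathscr{T}$), not that its intervals form a base, so one must verify that $L'_a$ and $R'_a$ are genuinely open in $\mathscr{T}$ — which they are, being $\preceq$-rays — and that they are also closed in $X\setminus\{a\}$, which follows because each is the complement of the other relative to $X\setminus\{a\}$. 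Once these points are dispatched, the connectedness argument linking "local agreement of the two orders" into "global agreement" is the heart of the matter and yields the theorem.
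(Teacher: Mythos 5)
The paper offers no proof of this statement --- it is quoted directly from Eilenberg --- so your argument has to stand on its own, and its pivotal step does not. You assert that the decomposition $X\setminus\{a\}=L_a\sqcup R_a$ is, up to swapping, the \emph{only} partition of $X\setminus\{a\}$ into two disjoint nonempty open sets, ``because any such partition must have each piece connected''. That implication is unfounded: a two-piece open partition of a disconnected space need not have connected pieces; it is forced only when $X\setminus\{a\}$ has exactly two components, i.e.\ only after one knows that the rays $L_a=(\leftarrow,a)_{\leq}$ and $R_a=(a,\to)_{\leq}$ are themselves connected \emph{in the topology $\mathscr{T}$ of $X$}. That is precisely the nontrivial lemma (due to Eilenberg; see also Kok's Theorem 1.3, invoked repeatedly in this paper) on which the whole uniqueness argument rests, and your sketch neither proves it nor reduces to it --- so the justification is circular. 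Note that it is not immediate: connectedness of $(X,\mathscr{T})$ passes to the coarser topology $\mathscr{T}_{\leq}$, so $(X,\leq)$ is a dense, Dedekind-complete order, but connectedness of a ray in the coarser order topology says nothing about its connectedness in the finer topology $\mathscr{T}$, which is what you need. The same missing lemma is hidden at a second point: for a $\leq$-cut point $a$ you take for granted that $L'_a$ and $R'_a$ are both nonempty, i.e.\ that $a$ is not a $\preceq$-endpoint; ruling this out is again ray-connectedness, applied this time to the second order $\preceq$.

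Once interval/ray connectedness is actually established, your plan does work, and in fact more simply than you propose: from $\{L'_a,R'_a\}=\{L_a,R_a\}$ for every cut point $a$, a direct pairwise comparison already gives global coherence on the set of cut points (if $L'_a=L_a$ and $b<a$ is another cut point, then $b\prec a$, and since $a\in R_b\cap R'_b$ one cannot have $L'_b=R_b$, so $L'_b=L_b$), with no need for the open-agreement-set connectedness argument; what remains is the admittedly fiddly bookkeeping at the at most two endpoints, which you correctly flag. For comparison, Eilenberg's own route is different: he works with $X\times X\setminus\Delta$, shows that a compatible order corresponds to a clopen splitting of it into the graph of $<$ and its transpose, and proves that for connected $X$ this complement of the diagonal has exactly two components --- the analogous connectedness lemma appearing there instead of ray-connectedness. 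Either way, some such connectedness lemma is the heart of the theorem, and it is exactly the piece your proposal leaves unproved.
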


Ernest Michael was the first to relate linear orders to weak
selections. For a set $X$ and a weak selection
$\sigma:\mathscr{F}_2(X)\to X$, he defined a natural order-like
relation $\leq_\sigma$ on $X$ by $x\leq_\sigma y$ if
$\sigma(\{x,y\})=x$ \cite[Definition 7.1]{michael:51}. The relation
$\leq_\sigma$ is very similar to a linear order on $X$ being both
\emph{total} and \emph{antisymmetric}, but may fail to be
\emph{transitive}. For convenience, we write $x<_\sigma y$ provided
$x\leq_\sigma y$ and $x\neq y$. For a space $X$, the strict relation
$x<_\sigma y$ plays an important role in describing continuity of
$\sigma$. Namely, $\sigma$ is continuous iff for every $x,y\in X$ with
$x<_\sigma y$, there are open sets $U,V\subset X$ such that $x\in U$,
$y\in V$ and $s<_\sigma t$ for every $s\in U$ and $t\in V$
\cite[Theorem 3.1]{gutev-nogura:01a}.  Continuity of a weak selection
$\sigma$ implies that all $\leq_\sigma$-open intervals
$(\leftarrow, x)_{\leq_\sigma}=\{y\in X: y<_\sigma x\}$ and
$(x,\to)_{\leq_\sigma}= \{y\in X: x<_\sigma y\}$, $x\in X$, are open
in $X$ \cite{michael:51}, but the converse is not necessarily true
\cite[Example 3.6]{gutev-nogura:01a} (see also \cite[Corollary 4.2 and
Example 4.3]{gutev-nogura:09a}). For an extended review of (weak)
hyperspace selections, the interested reader is refereed to
\cite{gutev-2013springer}. Finally, let us explicitly remark that if
$f\in\sel[\mathscr{D}]$ for some $\mathscr{D}\subset \mathscr{F}(X)$
with $\mathscr{F}_2(X)\subset \mathscr{D}$, then
$f\uhr\mathscr{F}_2(X)$ is a continuous weak selection for $X$. In
this case, the order-like relation generated by
$f\uhr\mathscr{F}_2(X)$ will be simply denoted by $\leq_f$.\medskip

In the setting of connected spaces, using Theorem
\ref{theorem-Sel-Cyclic-v10:1}, Michael gave a complete description of
continuity of hyperspace selections. In the one direction, he showed
the following properties of these selections, see \cite[Lemmas 7.2 and
7.3]{michael:51}.

\begin{theorem}[\cite{eilenberg:41,michael:51}]
  \label{theorem-Sel-Cyclic-v3:1}
  Let $X$ be a connected space, $\mathscr{F}_2(X)\subset
  \mathscr{D}\subset \mathscr{F}(X)$ and $f\in \sel[\mathscr{D}]$. 
  Then 
  \begin{enumerate}
  \item $\leq_f$ is a linear order and $X$ is weakly orderable with
    respect to $\leq_f$.
  \item $f(S)=\min_{\leq_f} S$ for every $S\in \mathscr{D}$.
  \end{enumerate}
  Moreover, if $g\in\sel[\mathscr{D}]$ with $g\neq f$, then
  $g(S)=\max_{\leq_f}S$ for every $S\in \mathscr{D}$.  
\end{theorem}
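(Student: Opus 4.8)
The plan is to deduce everything from two ingredients already at hand: Eilenberg's Theorem~\ref{theorem-Sel-Cyclic-v10:1}, and the fact (recalled above from \cite{gutev-nogura:01a}) that a continuous weak selection $\sigma$ separates $<_\sigma$-related points by open sets, so that its open $\leq_\sigma$-rays are open in $X$. Put $\sigma=f\uhr\mathscr{F}_2(X)$; this is a continuous weak selection, $\leq_f=\leq_\sigma$ is total and antisymmetric by definition, and both rays $(\leftarrow,x)_{\leq_f}$ and $(x,\to)_{\leq_f}$ are open in $X$ for every $x\in X$. For part (a) I would first show that $\leq_f$ is transitive. If it is not, totality and antisymmetry force a $3$-cycle: distinct $a,b,c$ with $a<_f b<_f c<_f a$. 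Consider
\[
  A=(a,\to)_{\leq_f}\cap(\leftarrow,c)_{\leq_f},\qquad
  B=(\leftarrow,a)_{\leq_f}\cup(c,\to)_{\leq_f}.
\]
Both are open; $b\in A$ and $a\in B$ (since $c<_f a$), so both are nonempty; $A\cap B=\emptyset$ because $(a,\to)_{\leq_f}\cap(\leftarrow,a)_{\leq_f}=\emptyset=(\leftarrow,c)_{\leq_f}\cap(c,\to)_{\leq_f}$; and $A\cup B=X$ by a one-line case check (if $a<_f x<_f c$ then $x\in A$, while otherwise $x\leq_f a$ or $c\leq_f x$, and in either case $x\in B$ since $c<_f a$). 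Thus $\{A,B\}$ disconnects $X$ --- impossible. Hence $\leq_f$ is a linear order, and since $\mathscr{T}_{\leq_f}$ is generated by the rays $(\leftarrow,x)_{\leq_f}$, $(x,\to)_{\leq_f}$, which are open in $X$, we get $\mathscr{T}_{\leq_f}\subset\mathscr{T}$; that is, $X$ is weakly orderable with respect to $\leq_f$.

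For part (b), the formula $f(S)=\min_{\leq_f}S$ I would prove first for finite $S$, by induction on $|S|$: for $|S|\leq 2$ it is exactly the definition of $\leq_f$, and for the inductive step, writing $S=\{x_1<_f\dots<_f x_m\}$ with $m\geq3$ and $T=S\setminus\{x_m\}$, one studies the continuous map $t\mapsto f(T\cup\{t\})$ and uses the connectedness of $X$ (together with the weak orderability just established) to propagate the value $f(T)=\min_{\leq_f}T=x_1$, known by the inductive hypothesis, to $f(S)$; this is essentially \cite[Lemma~7.3]{michael:51}. For arbitrary $S\in\mathscr{D}$ I would then argue by Vietoris approximation by finite subsets: if $f(S)=s$ and some $t\in S$ had $t<_f s$, pick, by continuity of $f$, a basic Vietoris neighbourhood $\langle\mathscr{V}\rangle$ of $S$ with $f(\langle\mathscr{V}\rangle)\subset(t,\to)_{\leq_f}$, choose $s_V\in S\cap V$ for $V\in\mathscr{V}$, and set $S'=\{t\}\cup\{s_V:V\in\mathscr{V}\}$. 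Then $S'$ is a finite subset of $S$ lying in $\langle\mathscr{V}\rangle$, so $t<_f f(S')$; but $f(S')=\min_{\leq_f}S'\leq_f t$ by the finite case --- a contradiction. Hence $f(S)\leq_f t$ for every $t\in S$, i.e.\ $f(S)=\min_{\leq_f}S$ (in particular this minimum exists).

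For the ``moreover'' clause, let $g\in\sel[\mathscr{D}]$ with $g\neq f$. If $\leq_g$ were equal to $\leq_f$, then (b), applied to both selections, would give $g(S)=\min_{\leq_g}S=\min_{\leq_f}S=f(S)$ for every $S\in\mathscr{D}$, contradicting $g\neq f$; hence $\leq_g\neq\leq_f$. By part (a), $\leq_f$ and $\leq_g$ are two compatible linear orders on the connected weakly orderable space $X$, so by Eilenberg's Theorem~\ref{theorem-Sel-Cyclic-v10:1} they are mutually inverse, whence $g(S)=\min_{\leq_g}S=\max_{\leq_f}S$ for every $S\in\mathscr{D}$. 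I expect the main obstacle to be the finite case of (b): deciding which element of a finite set $f$ selects is a genuinely global matter, and ruling out a non-minimal choice requires the connectedness of $X$ in an essential way (through the connectivity of the order intervals of $(X,\leq_f)$); by comparison, Part (a) and the reduction of the general case of (b) to the finite one are routine.
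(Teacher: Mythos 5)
The paper itself offers no proof of this theorem --- it is quoted from Eilenberg and Michael (Lemmas 7.2 and 7.3 of \cite{michael:51}) --- so your argument has to stand on its own. Your part (a) is correct and is the classical argument: a failure of transitivity produces a $3$-cycle $a<_f b<_f c<_f a$, your two sets $A$ and $B$ are open (because continuity of $f\uhr\mathscr{F}_2(X)$ makes the $\leq_f$-rays open), disjoint, nonempty and cover $X$, contradicting connectedness; openness of the rays then gives $\mathscr{T}_{\leq_f}\subset\mathscr{T}$. The ``moreover'' clause is also fine once (b) is in hand, via Theorem \ref{theorem-Sel-Cyclic-v10:1}.

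The genuine gap is in part (b): both halves of your argument evaluate $f$ at sets that need not belong to $\mathscr{D}$. In the finite induction you apply $f$ to $T\cup\{t\}$, a set with three or more points, and in the approximation step you apply it to the finite set $S'=\{t\}\cup\{s_V:V\in\mathscr{V}\}$; the hypothesis only gives $\mathscr{F}_2(X)\subset\mathscr{D}$, so $f(T\cup\{t\})$ and $f(S')$ are simply not defined. Moreover, this is not a removable technicality, because continuity of $f$ at a set $S\in\mathscr{D}$ with $|S|\geq 3$ says nothing unless $\mathscr{D}$ has other members Vietoris-close to $S$, and no set of at most two points lies in a neighbourhood $\langle\mathscr{V}\rangle$ of $S$ once $\mathscr{V}$ contains three pairwise disjoint sets meeting $S$. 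Concretely, take $X=[0,1]$, $S=\{0,1/2,1\}$ and $\mathscr{D}=\mathscr{F}_2(X)\cup\{S\}$: then $S$ is an isolated point of $\mathscr{D}$, so putting $f(A)=\min A$ for $A\in\mathscr{F}_2(X)$ and $f(S)=1/2$ yields $f\in\sel[\mathscr{D}]$ with $f(S)\neq\min_{\leq_f}S$. Hence any proof of (b) must exploit a closure property of $\mathscr{D}$ beyond $\mathscr{F}_2(X)\subset\mathscr{D}$ (for instance that $\mathscr{D}$ contains all finite sets, or all sets $S\cup\{x\}$ with $S\in\mathscr{D}$ and $x\in X$), which is exactly what your construction tacitly assumes and what does hold for the families $\mathscr{F}(Z)$ to which the paper actually applies the result. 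Granting such a closure property (say $\mathscr{D}=\mathscr{F}(X)$ or $\mathscr{F}(X\setminus\{p\})$), your outline --- the clopen-propagation argument along order intervals for finite sets, then the Vietoris approximation of an arbitrary $S$ by finite subsets, and the derivation of the ``moreover'' clause from Eilenberg's theorem --- is correct and is essentially Michael's proof; but as a proof of the statement with an arbitrary $\mathscr{D}$ between $\mathscr{F}_2(X)$ and $\mathscr{F}(X)$ it does not go through, and the missing hypothesis should be made explicit.
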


Next, he established the following counterpart of Theorem
\ref{theorem-Sel-Cyclic-v3:1} in \cite[Lemma 7.5.1]{michael:51}, for
convenience we state it for the special case of
$\mathscr{D}=\mathscr{F}(X)$.

\begin{theorem}[\cite{michael:51}]
  \label{theorem-Sel-Cyclic-v4:1}
  Let $X$ be a connected space which is weakly orderable with respect
  to a linear order $\leq$ such that $\min_{\leq} S$ does exist, for
  each $S\in \mathscr{F}(X)$. Then $\mathscr{F}(X)$ has a continuous
  selection. In fact, $f(S)=\min_{\leq} S$, $S\in \mathscr{F}(X)$, is
  a continuous selection for $\mathscr{F}(X)$. 
\end{theorem}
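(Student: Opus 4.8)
The plan is to prove Theorem \ref{theorem-Sel-Cyclic-v4:1}, namely that if $X$ is connected and weakly orderable with respect to a linear order $\leq$ for which every nonempty closed set has a $\leq$-minimum, then $f(S)=\min_{\leq}S$ is a Vietoris continuous selection for $\mathscr{F}(X)$. The hypothesis that $\min_{\leq}S$ exists for every $S\in\mathscr{F}(X)$ guarantees $f$ is well defined as a selection, so the entire content is continuity; the rest is a local computation with basic Vietoris neighborhoods. First I would fix $S_0\in\mathscr{F}(X)$, put $p=f(S_0)=\min_{\leq}S_0$, and fix an arbitrary open $O\ni p$; the goal is a basic Vietoris neighborhood $\langle\mathscr{V}\rangle$ of $S_0$ with $f(\langle\mathscr{V}\rangle)\subset O$.

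The key structural observation is that since $p$ is the $\leq$-minimum of $S_0$, the set $S_0$ is contained in the $\leq$-closed ray $[p,\to)_{\leq}=\{x:p\leq x\}$. There are two cases. If $p$ is not the global $\leq$-minimum of $X$, pick any $q<p$; then $O\cap(q,\to)_{\leq}$ is an open set containing $p$ (recall $\mathscr{T}_\leq\subset\mathscr{T}$, so order-open rays are open), and I would look for $\mathscr{V}$ so that every $S\in\langle\mathscr{V}\rangle$ lies in $(q,\to)_{\leq}$ and meets $O\cap(q,\to)_{\leq}$; the point of the ray $(q,\to)_{\leq}$ is that its minimum — if it exists in $S$ — is forced into $O$ once $S$ is also forced to avoid $[p,\to)_{\leq}\setminus O$... but that last part is the delicate bit. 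The clean way: shrink $O$ to an order-convex open set around $p$. Here is where connectedness enters decisively. Since $X$ is connected and weakly orderable, $X$ has no $\leq$-jumps and no $\leq$-gaps that are "visible" topologically; more precisely, for the order topology $\mathscr{T}_\leq$ connectedness of the coarser space forces the order to be densely ordered and (Dedekind-)complete-like in the relevant local sense. Concretely, I can find $a<p<b$ (or a one-sided version if $p$ is an endpoint) with the order-interval $(a,b)_{\leq}\subset O$: density gives me room on each side, and since $\mathscr{T}_\leq\subset\mathscr{T}$ the set $(a,b)_{\leq}$ is genuinely open in $X$.

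With such an order-convex open $W=(a,b)_{\leq}\subset O$ (suitably one-sided at an endpoint) containing $p$, continuity follows cleanly. Set $\mathscr{V}=\{W\cup(b,\to)_{\leq},\,W\}$ when $p$ has a predecessor-free right part, or more simply take the two open sets $U_1=(\leftarrow,b)_{\leq}$ and $U_2=X$, forming $\langle\{U_1,U_2\}\rangle=\langle\{X,(\leftarrow,b)_{\leq}\}\rangle=\{S:S\cap(\leftarrow,b)_{\leq}\neq\emptyset\}$. This is a Vietoris-open set containing $S_0$ (since $p\in S_0\cap(\leftarrow,b)_{\leq}$), and for any $S$ in it, $\min_{\leq}S$ exists by hypothesis and satisfies $\min_{\leq}S\leq s<b$ for some $s\in S$; combined with $a<p$ I need $a<\min_{\leq}S$ as well, which is where I must also force $S$ to avoid $(\leftarrow,a]_{\leq}$ — so the right neighborhood is $\langle\{(a,\to)_{\leq},\,(\leftarrow,b)_{\leq}\}\rangle$, i.e. require $S\subset(a,\to)_{\leq}$ and $S\cap(\leftarrow,b)_{\leq}\neq\emptyset$. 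Then $a<\min_{\leq}S<b$, so $f(S)=\min_{\leq}S\in(a,b)_{\leq}=W\subset O$. Finally $S_0$ lies in this neighborhood: $S_0\cap(\leftarrow,b)_{\leq}\ni p$, and $S_0\subset(a,\to)_{\leq}$ since $a<p=\min_{\leq}S_0$. The main obstacle is the endpoint/boundary bookkeeping: if $p$ is the global $\leq$-minimum of $X$ then there is no $a<p$, so the neighborhood must be $\langle\{X,(\leftarrow,b)_{\leq}\}\rangle$ with $(\leftarrow,b)_{\leq}\subset O$ (available by density/connectedness on the right of $p$), and one checks $\min_{\leq}S\leq s<b$ directly with the global minimum providing the lower bound automatically; symmetrically one must handle the case where $p$ has no immediate room on the right. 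Each sub-case is a one-line verification once the order-convex $O$-neighborhood of $p$ is secured, and securing it is exactly Eilenberg's structural input (Theorem \ref{theorem-Sel-Cyclic-v10:1} and the denseness forced by connectedness).
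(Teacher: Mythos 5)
Your argument hinges on one structural claim: that inside an arbitrary open $O\ni p$ you can find $a<p<b$ (or a one-sided version) with the order interval $(a,b)_\leq\subset O$, and this claim is false under the stated hypothesis. Connectedness does force the order to be dense (no jumps), but density only yields points between comparable points; it does not make the order intervals a neighbourhood base at $p$ inside $\mathscr{T}$-open sets. That would amount to $\mathscr{T}=\mathscr{T}_\leq$, i.e.\ to $X$ being orderable, whereas the theorem assumes only weak orderability ($\mathscr{T}_\leq\subset\mathscr{T}$). A concrete counterexample to your step is the topologist's sine curve $X=\{(0,0)\}\cup\{(t,\sin 1/t):0<t\leq 1\}$ ordered by the $t$-coordinate: it satisfies all hypotheses of the theorem (connected, weakly orderable, every nonempty closed set has a $\leq$-minimum), yet for $p=(0,0)$ every ray $(\leftarrow,b)_\leq$ with $b>p$ contains full oscillations of the curve, so no such ray, let alone an interval $(a,b)_\leq$, lies in a small ball $O$ around $p$. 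Hence the order-convex shrinking of $O$ that your whole construction rests on need not exist. (There is also a minor bookkeeping slip: $\langle\{(a,\to)_\leq,(\leftarrow,b)_\leq\}\rangle$ only requires $S\subset(a,\to)_\leq\cup(\leftarrow,b)_\leq=X$, not $S\subset(a,\to)_\leq$; the condition you intend is the basic set $\langle\{(a,\to)_\leq,(a,b)_\leq\}\rangle$.)

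The gap is avoidable by using $O$ itself as one of the Vietoris constraints instead of trying to trap $\min_\leq S$ in an order interval contained in $O$. If $S_0\subset O$, the neighbourhood $\langle\{O\}\rangle$ already works, since $\min_\leq S\in S\subset O$. Otherwise $S_0\setminus O$ is a nonempty closed set; put $m=\min_\leq(S_0\setminus O)$, note $p<m$, and choose $b$ with $p<b<m$ (if no point lay strictly between, $\{x:x\leq p\}=(\leftarrow,m)_\leq$ and $(p,\to)_\leq$ would be a disjoint open cover of the connected $X$). Now take $\mathscr{V}=\{\,O\cup(b,\to)_\leq,\ O\cap(\leftarrow,b)_\leq\,\}$. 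Every $S\in\langle\mathscr{V}\rangle$ meets $O\cap(\leftarrow,b)_\leq$, so $\min_\leq S<b$, while $S\subset O\cup(b,\to)_\leq$ forces $\min_\leq S\in O$; and $S_0\in\langle\mathscr{V}\rangle$ because $p\in O\cap(\leftarrow,b)_\leq$ and each point of $S_0\setminus O$ is $\geq m>b$. Note that the paper itself gives no proof of this statement (it is quoted from Michael's paper), so the comparison here is with a correct argument rather than with an in-text one; as written, your proof establishes the theorem only under the stronger hypothesis that $X$ is orderable.
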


Let $X$ be a connected space and $f\in \sel[\mathscr{F}(X)]$. Then by
Theorem \ref{theorem-Sel-Cyclic-v3:1}, $X$ is weakly orderable with
respect to $\leq_f$ and $p=f(X)$ is the $\leq_f$-minimal element of
$X$. However, $(p,\to)_{\leq_f}$ is a connected subset of $X$ being an
interval \cite[Theorem 1.3]{MR0339099}. Accordingly, $p$ has the
property that $X\setminus\{p\}$ is connected. Such a point $p\in X$ in
a connected space $X$ is called \emph{noncut}. Otherwise, if
$X\setminus \{p\}$ is not connected, the point $p$ is called
\emph{cut}.  Let $\noncut(X)$ be the set of all noncut points of $X$,
and $\cut(X)$ --- that of all cut points of $X$. It is well known that
each connected weakly orderable space has at most two noncut
points. The following further property is an immediate consequence of
the above considerations, see \cite[Theorem
1.1]{garcia-ferreira-gutev-nogura-sanchis-tomita:99}.

\begin{corollary}
  \label{corollary-sa_points-v6:2}
  Let $X$ be a connected space, $f\in \sel[\mathscr{F}(X)]$ and
  $p=f(X)$. Then  $p\in \noncut(X)$ and $f(S)=p$ for every
  $S\in\mathscr{F}(X)$ with $p\in S$. Moreover, if $q\in X$ with
  $q\neq p$, then $q\in\noncut(X)$ if and only if $f(S)\neq q$ for
  every $S\in \mathscr{F}(X)$ with $S\neq \{q\}$. 
\end{corollary}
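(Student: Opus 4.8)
The plan is to leverage Theorems \ref{theorem-Sel-Cyclic-v3:1} and \ref{theorem-Sel-Cyclic-v4:1} together with the characterisation of intervals in a connected weakly orderable space. Write $f\in\sel[\mathscr{F}(X)]$ and $p=f(X)$. By Theorem \ref{theorem-Sel-Cyclic-v3:1}, $\leq_f$ is a linear order compatible with $X$ (i.e.\ $X$ is weakly orderable with respect to it), and $f(S)=\min_{\leq_f}S$ for every $S\in\mathscr{F}(X)$. Since $X=X\in\mathscr{F}(X)$, the point $p=f(X)=\min_{\leq_f}X$ is the $\leq_f$-least element of $X$.

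First I would establish $p\in\noncut(X)$. The set $(p,\to)_{\leq_f}=X\setminus\{p\}$ (because $p$ is the minimum) is a $\leq_f$-interval, hence connected by \cite[Theorem 1.3]{MR0339099}. Thus $X\setminus\{p\}$ is connected, i.e.\ $p$ is noncut. Next, for any $S\in\mathscr{F}(X)$ with $p\in S$, since $p=\min_{\leq_f}X\le_f x$ for all $x\in S$, we get $f(S)=\min_{\leq_f}S=p$; this gives the second assertion of the corollary.

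For the final ``moreover'' part, fix $q\in X$ with $q\neq p$. Suppose first $q\in\noncut(X)$, so $X\setminus\{q\}$ is connected. I claim $q$ is the $\leq_f$-greatest element of $X$: otherwise the set $\{x\in X: x<_f q\}$ and $\{x\in X: q<_f x\}$ are both nonempty $\leq_f$-open sets whose union is $X\setminus\{q\}$, and they are disjoint, contradicting connectedness of $X\setminus\{q\}$ (here one uses that $\leq_f$-open rays are open in $X$ by Theorem \ref{theorem-Sel-Cyclic-v3:1}(a) together with compatibility, and that any $x\neq q$ lies in exactly one of them since $\leq_f$ is total and antisymmetric). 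So $q=\max_{\leq_f}X$. Then for any $S\in\mathscr{F}(X)$ with $S\neq\{q\}$, pick $x\in S$ with $x\neq q$; then $x<_f q$, so $\min_{\leq_f}S\neq q$, i.e.\ $f(S)\neq q$. Conversely, if $f(S)\neq q$ for every $S\in\mathscr{F}(X)$ with $S\neq\{q\}$, I argue $q$ must be $\leq_f$-maximal: if not, there is $x>_f q$, and then $S=\{q,x\}$ satisfies $S\neq\{q\}$ yet $f(S)=\min_{\leq_f}S=q$, a contradiction. Once $q=\max_{\leq_f}X$, the set $X\setminus\{q\}=(\leftarrow,q)_{\leq_f}$ is again a $\leq_f$-interval, hence connected by \cite[Theorem 1.3]{MR0339099}, so $q\in\noncut(X)$.

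The only mild subtlety — not really an obstacle — is being careful that $\leq_f$-maximality of $q$ is equivalent to $X\setminus\{q\}$ being an interval: this fails in general linearly ordered spaces but works here because $q$ is distinct from the minimum $p$, so $X\setminus\{q\}=(\leftarrow,q)_{\leq_f}$ is genuinely an initial segment determined by a single point, and \cite[Theorem 1.3]{MR0339099} applies to such intervals. All other steps are direct substitutions into Theorems \ref{theorem-Sel-Cyclic-v3:1} and \ref{theorem-Sel-Cyclic-v4:1} and the elementary order bookkeeping above.
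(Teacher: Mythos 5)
Your proof is correct and follows essentially the same route as the paper: Theorem \ref{theorem-Sel-Cyclic-v3:1} gives $f=\min_{\leq_f}$ with $p$ the $\leq_f$-minimum, connectedness of the ray $(p,\to)_{\leq_f}$ gives $p\in\noncut(X)$, and the ``moreover'' part reduces to the equivalence of $q\in\noncut(X)$ with $q$ being $\leq_f$-maximal. The only difference is that where the paper cites \cite[Corollary 2.7]{gutev:07a} for that equivalence, you prove it directly (the two open $\leq_f$-rays disconnect $X\setminus\{q\}$ when $q$ is neither minimal nor maximal, and $(\gets,q)_{\leq_f}$ is a connected interval when $q$ is maximal), which is a perfectly adequate elementary substitute; your invocation of Theorem \ref{theorem-Sel-Cyclic-v4:1} is not actually needed.
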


\begin{proof}
  As remarked above, $p\in \noncut(X)$. Moreover, since $p$ is the
  $\leq_f$-minimal element of $X$, it follows from Theorem
  \ref{theorem-Sel-Cyclic-v3:1} that $f(S)=p$ for every
  $S\in \mathscr{F}(X)$ with $p\in S$.  Suppose that $q\in X$ with
  $q\neq p$. Then $X\setminus\{q\}$ is connected precisely when $q$ is
  the $\leq_f$-maximal element of $X$ \cite[Corollary 2.7]{gutev:07a},
  see also \cite[Theorem 1.3]{MR0339099}. Therefore, by Theorem
  \ref{theorem-Sel-Cyclic-v3:1}, $q\in \noncut(X)$ precisely when
  $f(S)\neq q$ for every $S\in \mathscr{F}(X)$ with $S\neq\{q\}$.
\end{proof}

There are connected weakly orderable spaces $X$ such that
$|\noncut(X)|=2$, but $\mathscr{F}(X)$ has precisely one continuous
selection for $\mathscr{F}(X)$. For instance, such a space is the
topologist's sine curve
$X = \{(0, 0)\} \cup\big\{(t, \sin 1/t) : 0 < t \leq 1\big\}$, see
\cite[Example 8 and Lemma 15]{nogura-shakhmatov:97a}. In this regards,
the following natural result was obtained by Nogura and Shakhmatov
\cite{nogura-shakhmatov:97a}.

\begin{theorem}[\cite{nogura-shakhmatov:97a}]
  \label{theorem-Sel-Cyclic-v4:2}
  A connected space $X$ is compact and orderable if and only if it has
  exactly two continuous selections for $\mathscr{F}(X)$.
\end{theorem}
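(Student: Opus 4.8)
The plan is to prove the two implications separately: the forward one is a short consequence of the Eilenberg--Michael results recalled above, while the converse requires an additional argument that promotes weak orderability to orderability.

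\emph{Sufficiency.} Suppose $X$ is connected, compact and orderable, and fix a compatible order $\leq$, so $\mathscr{T}_{\leq}=\mathscr{T}$. Since $X$ is compact, each $S\in\mathscr{F}(X)$ is a compact subset of a linearly ordered space and therefore has both a $\leq$-least element $\min_{\leq}S$ and a $\leq$-greatest element $\max_{\leq}S$. The reverse order of $\leq$ is again compatible for $X$, so applying Theorem \ref{theorem-Sel-Cyclic-v4:1} to $\leq$ and then to its reverse yields two continuous selections $S\mapsto\min_{\leq}S$ and $S\mapsto\max_{\leq}S$ for $\mathscr{F}(X)$, and these differ because $|X|\geq 2$. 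On the other hand, the last assertion of Theorem \ref{theorem-Sel-Cyclic-v3:1} shows that every connected space has at most two continuous selections for $\mathscr{F}(X)$ --- once one selection $f$ is fixed, any other must be $S\mapsto\max_{\leq_f}S$. Hence $X$ has exactly two.

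\emph{Necessity.} Now suppose $X$ is connected and $f\neq g$ in $\sel[\mathscr{F}(X)]$, and write $\leq$ for $\leq_f$. By Theorem \ref{theorem-Sel-Cyclic-v3:1}, $X$ is weakly orderable with respect to $\leq$ and $f(S)=\min_{\leq}S$, $g(S)=\max_{\leq}S$ for every $S\in\mathscr{F}(X)$; in particular every nonempty $\mathscr{T}$-closed subset of $X$ has a $\leq$-least and a $\leq$-greatest element, and (weak orderability) each ``closed'' ray $(\leftarrow,a]_{\leq}$, $[a,\to)_{\leq}$, being the complement of an open ray, is $\mathscr{T}$-closed. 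The key step --- and the one I expect to be the main obstacle --- is to show that $\mathscr{T}_{\leq}=\mathscr{T}$, i.e.\ that $X$ is orderable; I would do this by proving that every $\mathscr{T}$-closed $A\subset X$ is $\mathscr{T}_{\leq}$-closed. Assume not: pick $x^{*}\in\overline{A}^{\,\mathscr{T}_{\leq}}\setminus A$ and set $A^{-}=A\cap(\leftarrow,x^{*}]_{\leq}$, $A^{+}=A\cap[x^{*},\to)_{\leq}$, both $\mathscr{T}$-closed, with $A=A^{-}\cup A^{+}$ and, since $x^{*}\notin A$, $A^{-}\subset(\leftarrow,x^{*})_{\leq}$ and $A^{+}\subset(x^{*},\to)_{\leq}$. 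If $A^{-}\neq\emptyset$, then $m^{-}:=g(A^{-})=\max_{\leq}A^{-}$ satisfies $m^{-}<x^{*}$, so the $\mathscr{T}_{\leq}$-open ray $(m^{-},\to)_{\leq}$ is a neighbourhood of $x^{*}$ disjoint from $A^{-}$; symmetrically, if $A^{+}\neq\emptyset$, then $m^{+}:=f(A^{+})=\min_{\leq}A^{+}$ satisfies $m^{+}>x^{*}$ and $(\leftarrow,m^{+})_{\leq}$ is a neighbourhood of $x^{*}$ disjoint from $A^{+}$. Intersecting the available rays --- and treating directly the degenerate cases in which $A^{-}$ or $A^{+}$ is empty, or $x^{*}$ is a $\leq$-endpoint --- produces a $\mathscr{T}_{\leq}$-open neighbourhood of $x^{*}$ missing $A=A^{-}\cup A^{+}$, contradicting $x^{*}\in\overline{A}^{\,\mathscr{T}_{\leq}}$. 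Hence $\mathscr{T}_{\leq}=\mathscr{T}$ and $X$ is orderable.

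Finally I would deduce compactness. The linearly ordered space $(X,\leq)$ has the property that every nonempty closed subset has a least and a greatest element; in particular $X$ itself has $\leq$-endpoints $f(X)$ and $g(X)$, and for every nonempty $A\subset X$ the $\leq$-greatest element of $\overline{A}$ equals $\sup_{\leq}A$ (for any $\leq$-upper bound $b$ of $A$, the closed set $(\leftarrow,b]_{\leq}$ contains $A$, hence $\overline{A}$), and dually for $\inf_{\leq}A$. Thus $(X,\leq)$ is a complete lattice, so $\mathscr{T}=\mathscr{T}_{\leq}$ is compact by the standard characterisation of compact linearly ordered spaces. As $X$ is connected by assumption, it is a connected compact orderable space, which completes the argument.
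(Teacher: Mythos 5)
Your proof is correct; the main thing to point out is that the paper itself contains no proof of this statement --- it is imported verbatim from Nogura and Shakhmatov \cite{nogura-shakhmatov:97a} and used as a black box (it is the last ingredient in the proof of Theorem \ref{theorem-Sel-Cyclic-v2:1}), so there is no in-paper argument to compare against; what you have written is a self-contained re-derivation from the Eilenberg--Michael material the paper does quote. Both halves check out. For sufficiency: compactness makes every $S\in\mathscr{F}(X)$ a compact subset of a LOTS, so $\min_{\leq}S$ and $\max_{\leq}S$ exist; Theorem \ref{theorem-Sel-Cyclic-v4:1} applied to $\leq$ and to its reverse yields two distinct continuous selections (distinct since $X$ has more than one point), and the final clause of Theorem \ref{theorem-Sel-Cyclic-v3:1} caps the total at two. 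For necessity: Theorem \ref{theorem-Sel-Cyclic-v3:1} gives $f(S)=\min_{\leq_f}S$ and $g(S)=\max_{\leq_f}S$, so every nonempty $\mathscr{T}$-closed set has both extrema; your two-ray argument (with the degenerate cases, which are routine) correctly shows every $\mathscr{T}$-closed set is $\mathscr{T}_{\leq_f}$-closed, whence $\mathscr{T}=\mathscr{T}_{\leq_f}$ and $X$ is orderable; and the observation that $\sup_{\leq}A=\max_{\leq}\overline{A}$ (via the $\mathscr{T}$-closed ray $(\leftarrow,b]_{\leq}$ for any upper bound $b$) makes $(X,\leq)$ order-complete with endpoints $f(X),g(X)$, so compactness follows from the standard criterion for order topologies. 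This is a clean way to recover the cited theorem from Theorems \ref{theorem-Sel-Cyclic-v3:1} and \ref{theorem-Sel-Cyclic-v4:1} alone; if you wanted to shorten it further, the orderability step is also available off the shelf (it is essentially the statement that a weakly orderable space in which all closed sets have order extrema is orderable), but your direct argument is short enough that the self-contained version is arguably preferable.
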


\section{Selections Avoiding Noncut Points}
\label{sec:order-almost-weak}

In this section, $X$ is used to denote a connected space. If $p\in X$
is a cut point of $X$, then there are disjoint sets $U,V\subset X$
such that $X\setminus\{p\}=U\cup V$ and
$\{p\}=\overline{U}\cap \overline{V}$. Following \cite{MR3705772},
such a pair $(U,V)$ of sets will be called a \emph{$p$-cut} of $X$. A
point $p\in X$ is said to \emph{separate} $x,y\in X$ if $x\in U$ and
$y\in V$ for some $p$-cut $(U,V)$ of $X$. If $p$ separates $x$ and
$y$, then $p$ is a cut point of $X$, and neither $x$ nor $y$ separates
the other two points (see \cite[Lemma 2.1]{MR0339099}). In these
terms, $X$ is called \emph{almost weakly orderable} \cite[Definition
3.2]{MR3705772} if it has finitely many noncut points and among every
three points of $X$ with two of them being cut, there is one which
separates the other two.\medskip

A subset $E\subset X$ is called an \emph{endset of $X$} if
$X\setminus E$ is connected. Evidently, $p\in\noncut(X)$ precisely
when the singleton $\{p\}$ is an endset of $X$. Thus, noncut points
are often called \emph{endpoints}. However, a set of endpoints is not
necessarily an endset. In contrast, the endpoints of an almost weakly
orderable space form an endset, see \cite[Corollary
3.4]{MR3705772}. Based on this, we have the following alternative
interpretation of a special class of almost weakly orderable spaces.

\begin{proposition}
  \label{proposition-Sel-Cyclic-vgg:2}
  Let $X$ be a connected space such that $\noncut(X)$ is a nonempty
  finite set. Then $X$ is almost weakly orderable if and only if
  $\noncut(X)$ is an endset of $X$ such that $\{p\}\cup \cut(X)$ is
  weakly orderable for every $p\in \noncut(X)$. 
\end{proposition}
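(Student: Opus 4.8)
Throughout, write $E=\noncut(X)$ — a nonempty finite set, by hypothesis — and for $p\in E$ set $Y_p=\{p\}\cup\cut(X)=X\setminus(E\setminus\{p\})$. The plan rests on two standing observations. Since a connected infinite Hausdorff space has no isolated points while a $T_1$-space has no nonempty finite open set, finiteness of $E$ forces $\cut(X)=X\setminus E$ to be dense in $X$. Hence, as soon as $E$ is known to be an endset, each $Y_p$ is connected (it lies between the connected dense set $\cut(X)$ and its closure), $\noncut(Y_p)=\{p\}$ (for a cut point $x\in\cut(X)$ the two sides of an $x$-cut of $X$ are open and one of them meets the dense set $\cut(X)$), and $\cut(X)=Y_p\setminus\{p\}$ is connected. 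The second observation is the elementary fact that in a connected space $Z$ a nonempty proper subset of $Z\setminus\{m\}$ that is relatively clopen there has $Z$-closure equal to itself together with $\{m\}$; this is used to pin down closures of order-intervals.

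\emph{Necessity.} Let $X$ be almost weakly orderable. By \cite[Corollary 3.4]{MR3705772}, $E$ is an endset of $X$ — the first half of the conclusion — so by the observations each $Y_p$ is connected with $\noncut(Y_p)=\{p\}$. I would first note that $Y_p$ is again almost weakly orderable: any three of its points contain at most one noncut point of $X$, hence at least two cut points of $X$, so almost weak orderability of $X$ produces a cut of $X$ at one of the three separating the other two, and restricting this cut to $Y_p$ gives a cut of $Y_p$ effecting the same separation in $Y_p$ (using the second observation for connectedness of $Y_p$). So $Y_p$ is a connected almost weakly orderable space whose finite noncut set $\{p\}$ is an endset, and one then gets that $Y_p$ is weakly orderable: concretely, order $Y_p$ by declaring $p$ least and, for $x,y\in\cut(X)$, $x<y$ iff $x$ separates $p$ and $y$ in $X$; totality follows by applying almost weak orderability to $\{p,x,y\}$ together with the fact that the noncut point $p$ separates nothing, antisymmetry from \cite[Lemma 2.1]{MR0339099}, transitivity from the standard betweenness properties of separation in connected spaces, and compatibility of the order topology from the interval structure of $X$.

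\emph{Sufficiency.} Let $E$ be a nonempty finite endset with every $Y_p$ weakly orderable. Then each $Y_p$ is connected with $\noncut(Y_p)=\{p\}$, while $\cut(X)=X\setminus E$ is connected and, being a subspace of $Y_p$, weakly orderable; by Theorem \ref{theorem-Sel-Cyclic-v10:1} it carries exactly two compatible orders, mutually inverse. Finiteness of $\noncut(X)$ is given, so it remains to show that among any three points $a,b,c$ of $X$ of which at least two are cut points, one separates the other two. If one of them, say $c$, is a noncut point, then $a,b\in\cut(X)\subset Y_c$; taking a compatible order on $Y_c$, in which the unique noncut point $c$ must be an endpoint, and passing to the inverse order and swapping $a,b$ if necessary so that $c<a<b$, the open order-intervals $(\leftarrow,a)$ and $(a,\to)$ show that $a$ separates $c$ and $b$ in $Y_c$. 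If all of $a,b,c$ are cut points, fix any $p\in E$ (this is where $E\neq\emptyset$ enters), order them in $Y_p$ as $a<b<c$, and conclude that $b$ separates $a$ and $c$ in $Y_p$. In either case everything reduces to the claim: if $p\in E$ and a point $m\neq p$ separates points $u,v$ in $Y_p$, then $m$ separates $u$ and $v$ in $X$.

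\emph{Proof of the claim, and the main obstacle.} Because the order-intervals $(\leftarrow,m)$ and $(m,\to)$ of $Y_p$ are connected \cite[Theorem 1.3]{MR0339099}, they are the only quasicomponents of $Y_p\setminus\{m\}$, so the separating cut of $Y_p$ is $(A,B)$ with $A=(\leftarrow,m)$, $B=(m,\to)$ — say $u\in A$, $v\in B$ — and $\overline{A}^{Y_p}=A\cup\{m\}$, $\overline{B}^{Y_p}=B\cup\{m\}$. Put $P=X\setminus\overline{B}^{X}$ and $Q=X\setminus\overline{A}^{X}$: disjoint open subsets of $X$ with $u\in P$, $v\in Q$. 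Since $A\cup B=Y_p$ is dense, $\overline{A}^X\cup\overline{B}^X=X$, and intersecting with $Y_p$ gives $\overline{A}^X\cap\overline{B}^X\cap Y_p=\{m\}$; hence $\overline{A}^X\cap\overline{B}^X=\{m\}\cup F$ with $F\subset E\setminus\{p\}$ finite, and $P\cup Q=X\setminus(\{m\}\cup F)$. The heart of the argument is that $F=\emptyset$. If $f\in F$, then $Y_f$ is weakly orderable with the noncut point $f$ an endpoint, and its compatible order restricted to $\cut(X)$ is one of the two compatible orders of $\cut(X)$; tracing this identification, the ray $(m,\to)$ of $Y_f$ equals $B$ or $A\setminus\{p\}$, so it is contained in $A\cup B$, lies in $\cut(X)$, and its $X$-closure meets $Y_f$ only in itself together with $m$ — yet $f\in\overline{A}^X\cap\overline{B}^X$ and $f\neq p$ force $f$ into that $X$-closure, contradicting that $f$ is a noncut point of $X$ with $f\neq m$. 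Hence $F=\emptyset$, so $(P,Q)$ is a cut of $X$ at $m$ with $u\in P$, $v\in Q$, and $m$ separates $u$ and $v$ in $X$. The hard part is precisely this transport of a separation from $Y_p$ up to $X$, and within it the elimination of the exceptional set $F$: that step genuinely needs the weak orderability of $Y_f$ for \emph{every} $f\in E$, coupled with the rigidity of compatible orders on the connected space $\cut(X)$. A lesser technical point, in the necessity part, is checking that the hand-built order on $Y_p$ has open rays, i.e.\ that its order topology is coarser than the given one.
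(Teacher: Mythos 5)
Your argument is correct in substance and follows the same skeleton as the paper, whose entire proof is a one-line reduction: almost weak orderability is, by definition, finiteness of $\noncut(X)$ plus the three-point separation property, $\noncut(X)$ is an endset by \cite[Corollary 3.4]{MR3705772} (cited just before the statement), and weak orderability of the connected sets $Y_p=\{p\}\cup\cut(X)$ is equated with the three-point separation property inside $Y_p$ via the Zaremba-Szczepkowicz/Kok theorem \cite[Theorem 4.1]{MR0339099}. Where you diverge is in making the two transfers of separation between $X$ and $Y_p$ explicit. In the necessity direction you correctly prove that among any three points of $Y_p$ one separates the other two \emph{in} $Y_p$; at that point you should simply invoke \cite[Theorem 4.1]{MR0339099}, since that is exactly its hypothesis. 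Instead you hand-build an order on $Y_p$ and leave transitivity and the openness of its rays to ``standard betweenness properties'' and a ``lesser technical point'' --- but those verifications are precisely the content of the theorem you are re-proving, so as written this is the one genuine loose end in your text; it disappears by citing the theorem, which is what the paper does. In the sufficiency direction your transport of a separation at $m$ from $Y_p$ up to $X$ --- setting $P=X\setminus\overline{B}$, $Q=X\setminus\overline{A}$ and eliminating the finite exceptional set $F\subset\noncut(X)\setminus\{p\}$ by comparing, via Theorem \ref{theorem-Sel-Cyclic-v10:1}, the compatible orders of $Y_f$ and $Y_p$ on the connected dense set $\cut(X)$ --- checks out, and it is real content that the paper's terse proof leaves implicit; it is also exactly where weak orderability of $Y_f$ for \emph{every} noncut point $f$ is needed. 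Two cosmetic slips that do not affect the argument: the dense set in your claim is $A\cup B=Y_p\setminus\{m\}$, not $Y_p$; and to see that points of $\cut(X)$ stay cut in $Y_p$ you need both sides of the $X$-cut to meet the dense set $\cut(X)$, not just one.
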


\begin{proof}
  Follows from the definition and the fact that a connected space $Z$
  is weakly orderable if and only if among every three points of $Z$
  there is one which separates the other two, see \cite[Theorem
  4.1]{MR0339099} (in a footnote of \cite{MR0235524}, the result was
  credited to D. Zaremba-Szczepkowicz).
\end{proof}

If $X$ is almost weakly
orderable, then there exists a partial order $\leq$ on $X$ such that
two points of $X$ are $\leq$-comparable precisely when they can be
separated, moreover this order is compatible with the topology of $X$
in the sense that all $\leq$-open intervals are open in $X$, see
\cite[Corollary 3.7]{MR3705772}. Such a partial order on $X$ is called
a \emph{separation partial order}, and any two separation partial
orderings on $X$ are either identical or inverse to each other
\cite[Proposition 3.8]{MR3705772}. Let us explicitly remark that the
idea of a separation order induced by cut points goes back to Whyburn
\cite{MR1501448}; the interested reader is also referred to
\cite{MR0125557, MR0264581}, and the more recent monograph
\cite{MR1192552}.\medskip

Let $X$ be almost weakly orderable.  It follows from Proposition
\ref{proposition-Sel-Cyclic-vgg:2}, see also \cite[Proposition
3.9]{MR3705772}, that for $p\in \noncut(X)$ and a separation partial
order $\leq$ on $X$, we have either $p< x$ for every $x\in\cut(X)$, or
$x<p$ for every $x\in\cut(X)$; in other words, $p<\cut(X)$ or
$\cut(X)< p$. In particular, noncut points $p,q\in X$ are not
$\leq$-comparable precisely when $\{p,q\}<\cut(X)$ or
$\cut(X)<\{p,q\}$. Accordingly, we have the following immediate
consequence.

\begin{corollary}
  \label{corollary-Sel-Cyclic-v11:1}
  If an almost weakly orderable space has more than two noncut points,
  then it has two noncut points which cannot be separated.
\end{corollary}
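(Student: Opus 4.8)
The plan is to read off the conclusion from the structure of a separation partial order together with a pigeonhole argument on its two ``ends''. First I would fix a separation partial order $\leq$ on $X$, which exists by \cite[Corollary 3.7]{MR3705772}, and recall that two points of $X$ are $\leq$-comparable precisely when they can be separated. If $\cut(X)=\emptyset$, then no point of $X$ separates any other two, so any two of the (at least three) points of $\noncut(X)$ already form a pair that cannot be separated; hence I may assume $\cut(X)\neq\emptyset$.

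Next I would use the fact, recalled just before the statement, that for every $p\in\noncut(X)$ one has $p<\cut(X)$ or $\cut(X)<p$; when $\cut(X)\neq\emptyset$ these two alternatives are mutually exclusive, since $p<c<p$ for some $c\in\cut(X)$ would contradict antisymmetry of $\leq$. This partitions $\noncut(X)$ into the two classes $\{p\in\noncut(X):p<\cut(X)\}$ and $\{p\in\noncut(X):\cut(X)<p\}$. Because $|\noncut(X)|>2$, one of these classes contains two distinct points $p$ and $q$, and then either $\{p,q\}<\cut(X)$ or $\cut(X)<\{p,q\}$. By the characterisation recalled above, this means precisely that $p$ and $q$ are not $\leq$-comparable, i.e.\ that they cannot be separated, which is the desired conclusion.

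I do not expect any genuine obstacle here: the corollary is an immediate bookkeeping consequence of the structural properties of almost weakly orderable spaces already in hand (existence and essential uniqueness of a separation partial order, and the fact that each noncut point lies entirely below or entirely above $\cut(X)$). The only point needing a moment's attention is the degenerate case $\cut(X)=\emptyset$, in which ``cannot be separated'' holds vacuously, so the argument splits cleanly into that trivial case and the pigeonhole case above.
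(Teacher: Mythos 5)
Your argument is correct and is essentially the paper's own reasoning: the corollary is stated there as an immediate consequence of exactly the facts you invoke (each $p\in\noncut(X)$ satisfies $p<\cut(X)$ or $\cut(X)<p$ for a separation partial order, and $\leq$-comparability coincides with separability), so your pigeonhole on the two sides of $\cut(X)$ is the intended proof. Your separate case $\cut(X)=\emptyset$ is harmless but in fact vacuous, since an almost weakly orderable space has only finitely many noncut points while $X$ is infinite, so $\cut(X)\neq\emptyset$ automatically.
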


Using these observations, we will prove the following theorem. In the
proof of this theorem, for a set $Z$, a linear order $\leq$ on $Z$ and
$a,b\in Z$ with $a< b$, we let
\begin{equation}
  \label{eq:Sel-Cyclic-vgg:1}
  (a,b)_{\leq}=\{z\in Z: a<
  z< b\}.
\end{equation}

\begin{theorem}
  \label{theorem-Sel-Cyclic-v2:1}
  Let $X$ be a connected space which has at least one noncut point. If
  $\mathscr{F}(X\setminus\{p\})$ has a continuous selection for each
  $p\in\noncut(X)$, then $X$ is compact and orderable.
\end{theorem}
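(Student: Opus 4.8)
The plan is to leverage the structural results from Section~\ref{sec:order-almost-weak} together with Michael's classical theorems from Section~\ref{sec:conn-weakly-order}. First I would fix a noncut point $p_0\in\noncut(X)$ and consider a continuous selection $f\in\sel[\mathscr{F}(X\setminus\{p_0\})]$; by Theorem~\ref{theorem-Sel-Cyclic-v3:1} applied to the connected space $X\setminus\{p_0\}$, the relation $\leq_f$ is a linear order making $X\setminus\{p_0\}$ weakly orderable, and $f(S)=\min_{\leq_f}S$. The first substantial step is to show that $X$ is \emph{almost weakly orderable}, which by Proposition~\ref{proposition-Sel-Cyclic-vgg:2} amounts to checking that $\noncut(X)$ is a nonempty finite endset with $\{p\}\cup\cut(X)$ weakly orderable for each $p\in\noncut(X)$. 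Finiteness of $\noncut(X)$ and the endset property should follow from analyzing how the orders $\leq_f$ behave for different choices of $p\in\noncut(X)$: each such selection forces a weak orderability structure on $X\setminus\{p\}$, and the cut/noncut dichotomy (via Corollary~\ref{corollary-sa_points-v6:2}-type reasoning adapted to $X\setminus\{p\}$) pins down where the noncut points of $X$ can sit relative to $\cut(X)$.

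Once $X$ is known to be almost weakly orderable, the key reduction is to show it has at most (hence exactly, since there is at least one) two noncut points, and in fact that $X$ itself is weakly orderable. Here I would invoke Corollary~\ref{corollary-Sel-Cyclic-v11:1}: if $X$ had more than two noncut points, it would have two noncut points $p,q$ that cannot be separated, with say $\{p,q\}<\cut(X)$ for the separation partial order $\leq$. The idea is to derive a contradiction with the existence of a continuous selection on $\mathscr{F}(X\setminus\{r\})$ for a suitable third noncut point $r$ — having two $\leq$-incomparable noncut points "on the same side" should obstruct extending the separation order to a genuine linear order compatible with a selection, because the selection value $f(X\setminus\{r\})$ would have to be a $\leq_f$-minimum, forcing one of $p,q$ to be below the other. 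With at most two noncut points, Proposition~\ref{proposition-Sel-Cyclic-vgg:2} gives that $\{p\}\cup\cut(X)=X\setminus\{\text{the other noncut point}\}$ or all of $X$ is weakly orderable; combined with the separation order and the two-noncut-point structure, $X$ is weakly orderable with respect to a linear order $\leq$ in which the (one or two) noncut points are the extreme elements.

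The final step is to upgrade "weakly orderable" to "compact and orderable." For compactness: with $X$ connected and weakly orderable with respect to $\leq$, and with the noncut points being exactly the $\leq$-extreme elements, I would show $\min_{\leq}S$ exists for every $S\in\mathscr{F}(X)$, then apply Theorem~\ref{theorem-Sel-Cyclic-v4:2} once we produce two continuous selections on $\mathscr{F}(X)$, which gives compactness and orderability simultaneously. Concretely, pick the noncut point $p$ with $p<\cut(X)$ (the $\leq$-minimum of $X$); then $X\setminus\{p\}=(p,\to)_{\leq}$, and the given selection $f\in\sel[\mathscr{F}(X\setminus\{p\})]$ has $\leq_f$ agreeing with $\leq$ (up to inversion) on $X\setminus\{p\}$ by Theorem~\ref{theorem-Sel-Cyclic-v10:1}. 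One then checks that $f$ extended by $f(S)=p$ whenever $p\in S$ is a continuous selection for $\mathscr{F}(X)$ — continuity at sets containing $p$ follows because $p$ being noncut with $X\setminus\{p\}$ connected means small Vietoris neighborhoods are controlled by $\leq$-intervals. This extension plus a second one (using the other noncut point, or its inverse) yields two selections on $\mathscr{F}(X)$, so Theorem~\ref{theorem-Sel-Cyclic-v4:2} finishes the proof.

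I expect the main obstacle to be the middle step: ruling out more than two noncut points, and more precisely showing that the separation partial order on the almost weakly orderable space $X$ is actually linear on all of $X$ (not just on $\{p\}\cup\cut(X)$). The delicate point is that $X\setminus\{p\}$ carrying a continuous selection tells us about $\leq_f$ on $X\setminus\{p\}$, but relating $\leq_f$ for varying $p$ to a single global separation order — and showing the noncut points sit at the ends rather than causing incomparabilities — requires carefully matching the Eilenberg uniqueness (Theorem~\ref{theorem-Sel-Cyclic-v10:1}) across the different punctured subspaces and using that intervals $(a,b)_{\leq_f}$ are connected (via \cite[Theorem 1.3]{MR0339099}) to propagate orderability back to $X$.
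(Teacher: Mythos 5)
Your plan defers exactly the step that constitutes the heart of the proof: showing that $\noncut(X)$ has at most two elements. You propose first to establish that $X$ is almost weakly orderable and then to invoke Corollary~\ref{corollary-Sel-Cyclic-v11:1} to rule out a third noncut point; but almost weak orderability already requires $\noncut(X)$ to be \emph{finite} (and, via Proposition~\ref{proposition-Sel-Cyclic-vgg:2}, an endset), and you offer no argument for finiteness beyond ``analyzing how the orders $\leq_f$ behave''. So the route through Corollary~\ref{corollary-Sel-Cyclic-v11:1} is circular: it presupposes the very bound you are trying to prove. The paper's proof goes the other way around: assuming a third noncut point $r$ besides $p$ and the $\leq_f$-minimum $q$ of $X\setminus\{p\}$, it chooses $y$ with $q<_f y<_f r$, forms the connected sets $L=\overline{(q,y)_{\leq_f}}$ and $R=\overline{(y,r)_{\leq_f}}$, uses connectedness of $X\setminus\{r\}$ to conclude $p\in L\cup R$, and then notes that $L\in\mathscr{F}(X\setminus\{r\})$ and $R\in\mathscr{F}(X\setminus\{q\})$ inherit continuous selections (legitimately, since $r,q\in\noncut(X)$), hence are weakly orderable and have at most two noncut points --- contradicting that one of them would contain three ($q,y,p$ or $y,r,p$). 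Nothing in your sketch substitutes for this argument, and you acknowledge the gap yourself. Likewise, the $\leq$-comparability of the two noncut points (needed to turn the separation partial order into a linear order and make $X$ weakly orderable) is only gestured at; the paper needs a dedicated argument, taking $F=X\setminus(V\cup\{p\})$ and showing $f(F)=\min_{\leq_f}F$ would have to lie in $\cut(X)$ although $F$ has no $\leq$-minimal element.

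A second, concrete error is the orientation of your final extension step. If $p$ is the $\leq$-minimum of $X$, then $(p,\to)_\leq=X\setminus\{p\}$ has no $\leq$-minimum (connectedness of $X$ forbids it), so by Theorem~\ref{theorem-Sel-Cyclic-v3:1} and Theorem~\ref{theorem-Sel-Cyclic-v10:1} the selection $f$ on $\mathscr{F}(X\setminus\{p\})$ must satisfy $f(S)=\max_\leq S$; extending it by $h(S)=p$ whenever $p\in S$ is then discontinuous. Already for $X=[0,1]$ and $p=0$ the unique selection on $\mathscr{F}((0,1])$ is the maximum, and your $h$ sends $\{1/n,1\}\mapsto 1$ while $h(\{0,1\})=0$, so $h$ fails continuity at $\{0,1\}$. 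The correct extension, as in the paper, places $p$ at the \emph{top} of the order and sets $h(S)=f(S\setminus\{p\})=\min_\leq S$ for $S\neq\{p\}$, with continuity supplied by Theorem~\ref{theorem-Sel-Cyclic-v4:1}; two such selections (one for each end) then feed into Theorem~\ref{theorem-Sel-Cyclic-v4:2} as you intended.
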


\begin{proof}
  Take a point $p\in \noncut(X)$ and a continuous selection $f$ for
  $\mathscr{F}(X\setminus\{p\})$. Since $X\setminus\{p\}$ is
  connected, by Theorem \ref{theorem-Sel-Cyclic-v3:1} and Corollary
  \ref{corollary-sa_points-v6:2}, $X\setminus\{p\}$ is weakly
  orderable with respect to $\leq_f$ and has a point
  $q\in X\setminus\{p\}$ with $q\leq_f x$ for every
  $x\in X\setminus\{p\}$, i.e.\ a noncut of $X\setminus\{p\}$. Then
  $q$ is also a noncut point of $X$ and, accordingly, $X$ has at least
  two noncut points. We will show that these are the only noncut
  points of $X$. Contrary to this, assume that $X$ has a noncut point
  $r\in X\setminus\{p,q\}$. Then $q<_f r$ and there exists a point
  $y\in X\setminus\{p\}$ with $q<_f y<_f r$. Moreover, both sets
  $(q,y)_{\leq_f}$ and $(y,r)_{\leq_f}$ are connected being intervals
  in the connected weakly orderable space $X\setminus\{p\}$
  \cite[Theorem 1.3]{MR0339099}. Hence, so are the sets
  $L=\overline{(q,y)_{\leq_f}}$ and $R=\overline{(y,r)_{\leq_f}}$. It
  is also evident that $q,y\in \noncut(L)$ and $y,r\in \noncut(R)$. We
  will show that this impossible. To this end, let us observe that
  $p\in \overline{(\gets, r)_{\leq_f}}$ because
  $p\notin \overline{(\gets, r)_{\leq_f}}$ will imply that
  $\overline{(\gets, r)_{\leq_f}}\setminus \{r\}=(\gets, r)_{\leq_f}$
  is a nonempty clopen proper subset of the connected space
  $X\setminus\{r\}$. Thus, we have that
  \[
    p\in \overline{(\gets,r)_{\leq_f}}=
    \overline{(q,y)_{\leq_f}\cup (y,r)_{\leq_f}}
    =\overline{(q,y)_{\leq_f}}\cup \overline{(y,r)_{\leq_f}} =
    L\cup R.
  \]
  In particular, $p\in \noncut(L)$ or $p\in \noncut(R)$. Hence, one of
  these sets has at least three noncut points. However, both
  $\mathscr{F}(L)$ and $\mathscr{F}(R)$ have continuous selections
  because so do $\mathscr{F}(X\setminus\{r\})$ and
  $\mathscr{F}(X\setminus \{q\})$, while
  $L\in \mathscr{F}(X\setminus\{r\})$ and
  $R\in \mathscr{F}(X\setminus\{q\})$. According to Theorem
  \ref{theorem-Sel-Cyclic-v3:1} and Corollary
  \ref{corollary-sa_points-v6:2}, this is impossible because each
  weakly orderable space has at most two noncut points. Thus, $p$ and
  $q$ are the only noncut points of $X$ and, in fact,
  $X\setminus\{p,q\}$ is connected. So, $\noncut(X)=\{p,q\}$ is an
  endset of $X$. Moreover, by Theorem \ref{theorem-Sel-Cyclic-v3:1},
  both $X\setminus\{p\}$ and $X\setminus\{q\}$ are weakly
  orderable. Therefore, by Proposition
  \ref{proposition-Sel-Cyclic-vgg:2}, $X$ is almost weakly
  orderable.\smallskip

  We are also ready to show that $X$ is compact and orderable. To see
  this, using one of these noncut points, for instance $p$, take a
  continuous selection $f$ for $\mathscr{F}(X\setminus\{p\})$. Then by
  Theorem \ref{theorem-Sel-Cyclic-v3:1}, $X\setminus\{p\}$ is weakly
  orderable with respect to $\leq_f$ and since
  $\cut(X)\subset X\setminus\{p\}$, it follows from \cite[Corollary
  3.7]{MR3705772} that $X$ has a separation partial order $\leq$ which
  extends the linear order $\leq_f$. This implies that $\leq$ is a
  linear order on $X$, i.e.\ that $p$ and $q$ are
  $\leq$-comparable. Indeed, by Theorem \ref{theorem-Sel-Cyclic-v3:1}
  and Corollary \ref{corollary-sa_points-v6:2}, $q\leq x$ for every
  $x\in X\setminus\{p\}$, i.e.\ $q< \cut(X)$. If $p$ and $q$ are not
  $\leq$-comparable, it follows from Corollary
  \ref{corollary-Sel-Cyclic-v11:1} that $\{p,q\}<\cut(X)$. In this
  case, take disjoint open sets $U,V\subset X$ with $p\in U$ and
  $q\in V$, and set $F=X\setminus (V\cup\{p\})$. Then
  $U\setminus\{p\}\subset F\subset \cut(X)$ which implies that for
  every $y\in \cut(X)$ there exists $x\in U\setminus\{p\}\subset F$
  with $x<y$, because $(\gets,y)_\leq\cap U$ is also a neighbourhood
  of $p$. However, by Theorem \ref{theorem-Sel-Cyclic-v3:1},
  $f(F)=\min_{\leq_f} F\in F\subset \cut(X)$, which is clearly
  impossible.  Thus, $X$ is weakly orderable with respect to $\leq$
  and, in particular, $q<\cut(X)<p$. This implies that $f$ can be
  extended to a continuous selection $h$ for $\mathscr{F}(X)$. Indeed,
  now each $S\in \mathscr{F}(X)$ has a $\leq$-minimal element because
  $p$ is the $\leq$-maximal element of $X$, so we may define
  $h(S)=f(S\setminus\{p\})=\min_{\leq_f}S= \min_{\leq} S$ for every
  $S\in \mathscr{F}(X)$ with $S\neq\{p\}$. According to Theorem
  \ref{theorem-Sel-Cyclic-v4:1}, $h$ is a continuous selection for
  $\mathscr{F}(X)$ with $h(X)=q$.  Interchanging $p$ and $q$, the same
  argument shows that $\mathscr{F}(X)$ also has a continuous selection
  which assigns to $X$ the point $p$. Therefore, by Theorem
  \ref{theorem-Sel-Cyclic-v4:2}, $X$ is compact and orderable.
\end{proof}

\section{Selections Avoiding Points}
\label{sec:select-avoid-points}

Here, we finalise the proof of Theorem \ref{theorem-sa_points-v10:1},
which is based on the following special type of hyperspace
selections. For a space $X$ and $p\in X$, a selection $f$ for
$\mathscr{F}(X)$ is called \emph{$p$-minimal}
\cite{garcia-ferreira-gutev-nogura-sanchis-tomita:99} if $f(S)\neq p$
for every $S\in \mathscr{F}(X)$ with $S\neq\{p\}$. The prototype of
the following property can be found in \cite[Theorem
3.1]{gutev-nogura:00d}.

\begin{proposition}
  \label{proposition-sa_points-v10:1}
  Let $X$ be a space which has a $p$-minimal selection
  $f\in \sel[\mathscr{F}(X)]$ for some point $p\in X$.  Then
  $f(S\cup\{p\})\in S$, whenever $S\subset X$ is a nonempty subset
  with ${S\cup\{p\}\in \mathscr{F}(X)}$. In particular,
  $\sel[\mathscr{F}(X\setminus\{p\})]\neq \emptyset$.
\end{proposition}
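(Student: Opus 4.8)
The first assertion I would dispose of directly. Let $S\subseteq X$ be nonempty with $S\cup\{p\}\in\mathscr{F}(X)$. If $p\in S$, then $S\cup\{p\}=S$ and $f(S\cup\{p\})=f(S)\in S$ because $f$ is a selection. If $p\notin S$, then $S\cup\{p\}\neq\{p\}$ (as $S\neq\emptyset$), so the $p$-minimality of $f$ forces $f(S\cup\{p\})\neq p$; since $f(S\cup\{p\})\in S\cup\{p\}$, we conclude $f(S\cup\{p\})\in S$.

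For the ``in particular'' clause, the plan is to transport $f$ along the operation of adjoining $p$. First I would check that
\[
\iota\colon\mathscr{F}(X\setminus\{p\})\longrightarrow\mathscr{F}(X),\qquad\iota(T)=T\cup\{p\},
\]
is well defined: for $T$ closed in $X\setminus\{p\}$, its closure $\overline{T}$ in $X$ satisfies $\overline{T}\subseteq T\cup\{p\}$, whence $\overline{T\cup\{p\}}=\overline{T}\cup\{p\}=T\cup\{p\}$, so $T\cup\{p\}\in\mathscr{F}(X)$. The substantive step is to verify that $\iota$ is continuous for the respective Vietoris topologies. Here I would take a basic Vietoris open set $\langle\mathscr{V}\rangle$ of $\mathscr{F}(X)$, with $\mathscr{V}$ a finite family of open subsets of $X$, and note that $T\cup\{p\}\in\langle\mathscr{V}\rangle$ holds precisely when $p\in\bigcup\mathscr{V}$, $T\subseteq\bigcup\mathscr{V}$, and $T\cap V\neq\emptyset$ for every $V\in\mathscr{V}$ with $p\notin V$. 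Separating the members of $\mathscr{V}$ that contain $p$ from those that do not, and intersecting each member with $X\setminus\{p\}$, this shows that $\iota^{-1}(\langle\mathscr{V}\rangle)$ is a basic Vietoris open subset of $\mathscr{F}(X\setminus\{p\})$ (and equals $\emptyset$ when $p\notin\bigcup\mathscr{V}$), so $\iota$ is continuous.

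Finally I would set $g=f\circ\iota\colon\mathscr{F}(X\setminus\{p\})\to X$. As a composition of continuous maps $g$ is continuous, and by the first assertion $g(T)=f(T\cup\{p\})\in T$ for each $T\in\mathscr{F}(X\setminus\{p\})$; in particular $g$ takes values in $X\setminus\{p\}$, so it is a continuous selection for $\mathscr{F}(X\setminus\{p\})$, which gives $\sel[\mathscr{F}(X\setminus\{p\})]\neq\emptyset$.

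The only real obstacle is the continuity of $\iota$; everything else is formal bookkeeping. One could instead verify directly that $T\mapsto f(T\cup\{p\})$ is Vietoris continuous, but routing it through $\iota$ keeps the verification with basic Vietoris sets cleanly isolated.
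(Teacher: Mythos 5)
Your proof is correct and takes essentially the same route as the paper: the first assertion from $p$-minimality plus $f(S\cup\{p\})\in S\cup\{p\}$, and the second by composing $f$ with the map $T\mapsto T\cup\{p\}$, whose Vietoris continuity the paper checks pointwise via a refining family $\mathscr{U}$ with $\bigcup\mathscr{U}=\bigcup\mathscr{V}\setminus\{p\}$ while you compute the preimage of a basic open set directly. The only point to state carefully is the choice of that basic set: its hitting members should be exactly the $V\in\mathscr{V}$ with $p\notin V$, together with $\bigl(\bigcup\mathscr{V}\bigr)\setminus\{p\}$ as a covering member (not literally $V\setminus\{p\}$ for every $V\in\mathscr{V}$, which would wrongly force $T$ to meet $V\setminus\{p\}$ for members containing $p$), and this is precisely what your stated characterisation of $T\cup\{p\}\in\langle\mathscr{V}\rangle$ delivers.
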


\begin{proof}
  Let $S\subset X$ be a nonempty set with
  $S\cup\{p\}\in \mathscr{F}(X)$. Then $f(S\cup\{p\})=p$ precisely
  when $S\cup\{p\}=\{p\}$, i.e.\ $S=\{p\}$, because $f$ is
  $p$-minimal. This is equivalent to $f(S\cup\{p\})\in S$. For the
  second part of this proposition, let us observe that
  $\overline{S}\subset S\cup\{p\}$, whenever
  $S\in \mathscr{F}(X\setminus\{p\})$. Hence, we may define a map
  $\varphi:\mathscr{F}(X\setminus\{p\})\to \mathscr{F}(X)$ by
  \[
    \varphi(S)=S\cup\{p\},\quad \text{for every $S\in
      \mathscr{F}(X\setminus\{p\})$.}
  \]
  Take ${S\in \mathscr{F}(X\setminus\{p\})}$ and a finite family
  $\mathscr{V}$ of open subsets of $X$ with
  $\varphi(S)\in \langle\mathscr{V}\rangle$. Next, take another finite
  family $\mathscr{U}$ of nonempty open subsets of $X$ such that
  $\mathscr{U}$ refines $\mathscr{V}$ and
  $\bigcup\mathscr{U}=\bigcup\mathscr{V}\setminus \{p\}$. Then
  $S\in \langle\mathscr{U}\rangle$ and
  $\varphi(\langle\mathscr{U}\rangle)\subset
  \langle\mathscr{V}\rangle$, so $\varphi$ is continuous with respect
  to the Vietoris topology on these hyperspaces. We may now define a
  continuous selection $g$ for $\mathscr{F}(X\setminus \{p\})$ by
  $g=f\circ \varphi$.
\end{proof}

\begin{proof}[Proof of Theorem \ref{theorem-sa_points-v10:1}]
  Assume that $X$ is a compact connected space which is
  orderable. Next, take a compatible linear order $\leq$ on $X$ and a
  point $p\in X$.  If $p$ is a noncut point of $X$, then by Corollary
  \ref{corollary-sa_points-v6:2} and Theorem
  \ref{theorem-Sel-Cyclic-v4:2}, $\mathscr{F}(X)$ has a continuous
  $p$-minimal selection. Hence, by Proposition
  \ref{proposition-sa_points-v10:1},
  $\sel[\mathscr{F}(X\setminus\{p\})]\neq \emptyset$. If
  $p\in \cut(X)$, then $p$ is a noncut point for both intervals
  \[
    Y=\{x\in X: x\leq p\}\quad\text{and}\quad Z=\{x\in X:p\leq x\}.
  \]
  Moreover, these intervals are infinite, compact and
  orderable. Hence, for the same reason as before,
  $\sel[\mathscr{F}(Y\setminus\{p\})]\neq \emptyset\neq
  \sel[\mathscr{F}(Z\setminus \{p\})]$.  Since $Y\setminus\{p\}$ and
  $Z\setminus\{p\}$ form a clopen partition of $X\setminus\{p\}$, we
  also have that
  $\sel[\mathscr{F}(X\setminus\{p\})]\neq \emptyset$.\medskip

  To see the converse, by Theorem \ref{theorem-Sel-Cyclic-v2:1}, it
  suffices to show that $X$ has a noncut point provided
  $\sel[\mathscr{F}(X\setminus\{p\})]\neq \emptyset$ for each
  $p\in X$. To this end, take cut points $y,z\in \cut(X)$. Next, let
  $(A,B)$ be a $y$-cut of $X$, and $(U,V)$ be a $z$-cut of $X$. Then
  $z$ doesn't belong to one of the sets $A$ or $B$, say $z\notin
  A$. Since $S=A\cup\{y\}$ is a connected subset of $X$, it is a
  subset of $U$ or $V$, for instance $S\subset U$. Moreover, it is
  closed in $X$. This implies that $S$ has a noncut point $p\in S$
  with $p\neq y$.  Indeed, by Corollary
  \ref{corollary-sa_points-v6:2}, $S$ has a noncut point because
  $\mathscr{F}(X\setminus\{z\})$ has a continuous selection, and hence
  so does $\mathscr{F}(S)$. If this point is $y$, then
  $A=S\setminus\{y\}$ is connected and $\mathscr{F}(A)$ has a
  continuous selection because so does
  $\mathscr{F}(X\setminus\{y\})$. Therefore, $A$ has a noncut point
  $p\in A$. Since $S$ is weakly orderable, $p$ is also a noncut point
  of $S$. Thus, $p\neq y$ and $S\setminus\{p\}$ is connected. However,
  $B\cup\{y\}$ is also connected and $y\in S\setminus\{p\}$, so
  $X\setminus\{p\}=(S\setminus\{p\})\cup B\cup \{y\}$ is connected as
  well, i.e.\ $p\in\noncut(X)$. We may now apply Theorem
  \ref{theorem-Sel-Cyclic-v2:1} to complete the proof.
\end{proof}

We conclude this section with some observations regarding the proper
place of Theorem \ref{theorem-sa_points-v10:1}. To this end, we will
first establish the following partial inverse of Proposition
\ref{proposition-sa_points-v10:1}.

\begin{proposition}
  \label{proposition-Sel-Cyclic-v7:1}
  Let $X$ be a space and $p\in X$ be a point which is a countable
  intersection of clopen sets. Then the following are
  equivalent\textup{:}
  \begin{enumerate}
  \item\label{item:Sel-Cyclic-v7:1}
    $\sel[\mathscr{F}(X)]\neq \emptyset$,
  \item\label{item:Sel-Cyclic-v7:2} $\mathscr{F}(X)$ has a continuous
    $p$-minimal selection,
  \item\label{item:Sel-Cyclic-v7:3}
    $\sel[\mathscr{F}(X\setminus\{p\})]\neq \emptyset$.
  \end{enumerate}
\end{proposition}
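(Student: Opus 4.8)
The plan is to prove the cycle of implications \eqref{item:Sel-Cyclic-v7:1} $\Rightarrow$ \eqref{item:Sel-Cyclic-v7:2} $\Rightarrow$ \eqref{item:Sel-Cyclic-v7:3} $\Rightarrow$ \eqref{item:Sel-Cyclic-v7:1}, where the middle arrow is immediate from Proposition \ref{proposition-sa_points-v10:1} (a $p$-minimal selection for $\mathscr{F}(X)$ yields $\sel[\mathscr{F}(X\setminus\{p\})]\neq\emptyset$), and the hypothesis that $p=\bigcap_{n}C_n$ for clopen sets $C_n$ is what powers the two less trivial implications. For \eqref{item:Sel-Cyclic-v7:1} $\Rightarrow$ \eqref{item:Sel-Cyclic-v7:2}, the idea is to start from an arbitrary $f\in\sel[\mathscr{F}(X)]$ and surgically modify it near $p$ so that the resulting selection never outputs $p$ except on $\{p\}$. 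Replacing $f$ by $g(S)=f(S\cup\{p\})$ if necessary, one may assume $f(S)=p$ whenever $p\in S$; the genuine task is then to redefine $f$ on those $S$ with $p\notin S$ but $p\in\overline{S}$ in a way that remains Vietoris-continuous. Here the clopen sets enter: we may assume $X=C_0\supset C_1\supset\cdots$ with $\bigcap_n C_n=\{p\}$, so each annulus $C_n\setminus C_{n+1}$ is clopen in $X$, and these annuli together with $\{p\}$ partition $X$.

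The concrete construction I would carry out: given $S\in\mathscr{F}(X)$ with $S\neq\{p\}$, let $n(S)$ be the least $n$ with $S\setminus C_{n+1}\neq\emptyset$ — this is well defined since $S\not\subset\{p\}$ forces $S$ to meet some annulus — and set $h(S)=f\bigl(S\setminus C_{n(S)+1}\bigr)$, noting $S\setminus C_{n(S)+1}$ is a nonempty closed subset of $X$ not containing $p$, so $h(S)\in S\setminus C_{n(S)+1}\subset S\setminus\{p\}$; put $h(\{p\})=p$. This $h$ is a $p$-minimal selection by construction, and the main work is checking continuity. The function $n(\cdot)$ is locally constant on $\{S:S\neq\{p\}\}$: if $n(S)=n$, then $\langle X\setminus C_{n+1},\, C_n\rangle\cap\langle X\rangle$ (more precisely the basic Vietoris neighbourhood forcing $S$ to meet $C_n\setminus C_{n+1}$ and to lie inside $X$) is a neighbourhood on which $n$ is constant; on such a neighbourhood $S\mapsto S\setminus C_{n+1}$ is Vietoris-continuous (intersection with a fixed clopen set), and composing with $f$ gives continuity of $h$ there. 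Continuity of $h$ at $\{p\}$ follows because any basic neighbourhood $\langle\mathscr{V}\rangle$ of $\{p\}$ contains all $S\subset\bigcup\mathscr{V}$, and $\{p\}=\bigcap C_n$ together with compactness-free clopen nesting shows every sufficiently small clopen box around $p$, hence every $S$ with $h(S)$ close to $p$, lands inside; one checks $h(S)\in S\subset C_{m}\subset\bigcup\mathscr{V}$ for $S$ in a small enough Vietoris neighbourhood.

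For the remaining implication \eqref{item:Sel-Cyclic-v7:3} $\Rightarrow$ \eqref{item:Sel-Cyclic-v7:1}, the plan is to reverse the gluing: start from $g\in\sel[\mathscr{F}(X\setminus\{p\})]$ and build $f\in\sel[\mathscr{F}(X)]$. For $S\in\mathscr{F}(X)$ with $p\in S$ and $S\neq\{p\}$, the set $S\setminus C_{n}$ is clopen-closed in $X\setminus\{p\}$ and nonempty for large $n$; using the least such $n$ we set $f(S)=g(S\setminus C_{n(S)})\in S$, set $f(S)=g(S)$ if $p\notin S$, and $f(\{p\})=p$. Continuity away from $\{p\}$ and at sets not containing $p$ is handled exactly as above via local constancy of $n(\cdot)$ and continuity of intersection with clopen sets; continuity at $\{p\}$ is automatic since any output lies in $S$ which is Vietoris-forced into a neighbourhood of $p$. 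The main obstacle I anticipate is the bookkeeping at the single point $\{p\}$ in the hyperspace — verifying that the piecewise-defined selection does not ``jump'' as $S$ degenerates to $\{p\}$ — and making sure the choice of the cutoff index $n(S)$ is genuinely locally constant in the Vietoris topology rather than merely upper or lower semicontinuous; the clopenness (not just openness) of the $C_n$ is exactly what rescues this, since it makes $S\mapsto S\cap C_n$ and $S\mapsto S\setminus C_n$ simultaneously Vietoris-continuous.
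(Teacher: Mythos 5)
Your cycle (a)$\Rightarrow$(b)$\Rightarrow$(c)$\Rightarrow$(a) matches the paper's, and two thirds of it are fine: the annulus construction $h(S)=f\bigl(S\setminus C_{n(S)+1}\bigr)$ for (a)$\Rightarrow$(b) is correct (indeed it gives a self-contained argument where the paper simply cites an external result), and (b)$\Rightarrow$(c) is Proposition \ref{proposition-sa_points-v10:1}, as in the paper. (A small aside: your preliminary ``normalisation'' $g(S)=f(S\cup\{p\})$ need not be a selection, since $f(S\cup\{p\})$ may equal $p\notin S$; but your actual construction never uses it.)

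The genuine problem is (c)$\Rightarrow$(a). Your definition is piecewise: $f(S)=g\bigl(S\setminus C_{n(S)}\bigr)$ when $p\in S\neq\{p\}$, but $f(S)=g(S)$ when $p\notin S$. Local constancy of $n(\cdot)$ does not rescue this, because $\{S: p\in S\}$ is Vietoris-closed, not open: a set $S\ni p$ is a $\tau_V$-limit of sets $T\not\ni p$, on which you apply the other formula, and there is no reason for $g(T)$ to approach $g\bigl(S\setminus C_{n(S)}\bigr)$. Concretely, let $X=\{0\}\cup\{1/n: n\geq 1\}$ with the usual topology, $p=0$, $C_n=\{0\}\cup\{1/m: m>n\}$, and let $g$ be the minimum with respect to the well-order $1/2\prec 1\prec 1/3\prec 1/4\prec\cdots$ on the discrete space $X\setminus\{0\}$; this $g$ is a continuous selection for $\mathscr{F}(X\setminus\{0\})$. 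Your formula gives $f(X)=g(X\setminus C_1)=g(\{1\})=1$, while the sets $T_k=\{1,1/2,\dots,1/k\}$ (closed, not containing $0$) converge to $X$ in $\tau_V$ and $f(T_k)=g(T_k)=1/2$ for $k\geq 2$; so $f$ is discontinuous at $X$. The fix is exactly the paper's construction: apply the truncation to \emph{every} $S\neq\{p\}$, whether or not $p\in S$ --- the paper sets $g(F)=f\bigl(F\cap S_{n(F)}\bigr)$ with $S_n=V_n\setminus V_{n+1}$ and $n(F)=\min\{n: F\cap S_n\neq\emptyset\}$ --- after which $n(\cdot)$ is constant on a Vietoris neighbourhood of each $F\neq\{p\}$, your continuity argument (intersection with a fixed clopen set plus continuity at singletons) goes through, and one in fact obtains a $p$-minimal selection, i.e.\ (c)$\Rightarrow$(b) directly.
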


\begin{proof}
  The implication
  \ref{item:Sel-Cyclic-v7:1}$\implies$\ref{item:Sel-Cyclic-v7:2}
  follows from \cite[Proposition 3.6]{gutev-nogura:08b}, while that of
  \ref{item:Sel-Cyclic-v7:2}$\implies$\ref{item:Sel-Cyclic-v7:3} is a
  consequence of Proposition \ref{proposition-sa_points-v10:1}. The
  implication
  \ref{item:Sel-Cyclic-v7:3}$\implies$\ref{item:Sel-Cyclic-v7:1} is
  implicitly contained in the proof of \cite[Proposition
  3.6]{gutev-nogura:08b}. Briefly, take a selection
  $f\in\sel[\mathscr{F}(X\setminus\{p\})]$ and a decreasing clopen
  family $\{V_{n}:n<\omega\}$ with $V_{0}=X$ and
  $\{p\}=\bigcap_{n<\omega}V_{n}$. Set $S_{n}=V_{n}\setminus V_{n+1}$,
  $n<\omega$, and for every $F\in \mathscr{F}(X)$ with $F\neq \{p\}$,
  let $n(F)=\min\{n<\omega:F\cap S_{n}\neq\emptyset\}$.  Finally,
  define a $p$-minimal selection $g:\mathscr{F}(X)\to X$ by
  $g(F)=f\big(F\cap S_{n(F)}\big)$, whenever $F\in \mathscr{F}(X)$
  with $F\neq \{p\}$; this is correct because
  $F\cap S_{n(F)}\in \mathscr{F}(X\setminus\{p\})$.  Moreover, $g$ is
  continuous at $\{p\}$ because each selection is continuous on the
  singletons. Take $F\in \mathscr{F}(X)$ with $F\neq \{p\}$, and a
  neighbourhood $U$ of $g(F)$. Since
  $f\uhr \mathscr{F}\big(S_{n(F)}\big)$ is continuous, there exists a
  finite family $\mathscr{W}_{0}$ of nonempty open subsets of
  $S_{n(F)}$ such that
  $F\cap S_{n(F)}\in \langle \mathscr{W}_{0}\rangle \subset
  f^{-1}(U)$. Set
  $W_0=V_{n(F)+1}\cup \big(\bigcup \mathscr{W}_{0}\big)$ and
  $\mathscr{W}=\{W_0\}\cup \mathscr{W}_{0}$. Then
  $\langle\mathscr{W}\rangle$ is a $\tau_{V}$-neighbourhood of $F$
  with $g(\langle \mathscr{W}\rangle)\subset U$. Indeed,
  $T\in \langle \mathscr{W}\rangle$ implies
  $T\cap S_{n(F)}\neq \emptyset$ and $T\subset V_{n(F)}$, so
  $n(T)=n(F)$. On the other hand,
  $T\cap S_{n(F)}\in \langle\mathscr{W}_0\rangle$ and therefore
  $g(T)=f\big(T\cap S_{n(T)}\big)\in U$. The proof is complete. 
\end{proof}

Based on this, we have the following theorem which is complementary to
Theorem \ref{theorem-sa_points-v10:1}. In this theorem, a space $X$ is
\emph{totally disconnected} if each singleton of $X$ is an
intersection of clopen subsets of $X$. Also, let us recall that a
space $X$ is of \emph{countable tightness} if for each $A \subset X$
and $x \in \overline{A}$, there exists a countable set $B \subset A$
with $x \in \overline{B}$.

\begin{theorem}
  \label{theorem-Sel-Cyclic-v7:1}
  Let $X$ be a totally disconnected space which is of countable
  tightness. Then $\sel[\mathscr{F}(X)]\neq \emptyset$ if and only if 
  $\sel[\mathscr{F}(X\setminus\{p\})]\neq \emptyset$, for every
  $p\in X$.
\end{theorem}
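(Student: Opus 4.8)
The plan is to reduce everything to Proposition~\ref{proposition-Sel-Cyclic-v7:1}: that proposition already yields the desired equivalence at any point $p$ which is a countable intersection of clopen sets, so the whole problem comes down to the following claim.

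\textbf{Claim.} If $X$ is totally disconnected, of countable tightness, and $\sel[\mathscr{F}(X)]\neq\emptyset$, then every $p\in X$ is a countable intersection of clopen subsets of $X$.

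Granting the Claim, the ``only if'' part is immediate: for $g\in\sel[\mathscr{F}(X)]$ and $p\in X$, the Claim places us in the hypothesis of Proposition~\ref{proposition-Sel-Cyclic-v7:1}, so $\sel[\mathscr{F}(X\setminus\{p\})]\neq\emptyset$. For the ``if'' part it suffices to produce a \emph{single} point $p$ with $\sel[\mathscr{F}(X\setminus\{p\})]\neq\emptyset$ which is a countable intersection of clopen subsets of $X$, and then invoke Proposition~\ref{proposition-Sel-Cyclic-v7:1} once more. So fix $q\in X$ and $p\in X\setminus\{q\}$; since $X$ is totally disconnected, choose a clopen $D\subseteq X\setminus\{q\}$ with $p\in D$. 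Then $D$ is clopen in $X$, is again totally disconnected and of countable tightness (both properties are hereditary), and $\sel[\mathscr{F}(D)]\neq\emptyset$ because $D$ is a clopen subset of $X\setminus\{q\}$ and $\sel[\mathscr{F}(X\setminus\{q\})]\neq\emptyset$. Applying the Claim to $D$ shows $p$ is a countable intersection of clopen subsets of $D$, hence of $X$, as needed.

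To prove the Claim, set $\sigma=g\uhr\mathscr{F}_2(X)$, a continuous weak selection, and let $\leq$ stand for $\leq_\sigma$. As recalled in Section~\ref{sec:conn-weakly-order}, continuity of $\sigma$ makes every set $\{y\in X:y<_\sigma x\}$ and $\{y\in X:x<_\sigma y\}$ open, so that $X\setminus\{x\}$ is the union of these two disjoint open sets and, as the complement of an open set, each of $\{y:y\leq_\sigma x\}$ and $\{y:y\geq_\sigma x\}$ is closed. Fix $p\in X$, which we may assume non-isolated, so $p$ lies in the closure of $\{y:y<_\sigma p\}$ or of $\{y:p<_\sigma y\}$. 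On each side that approaches $p$, countable tightness furnishes a countable subset of that side with $p$ in its closure; such a set is automatically cofinal, resp.\ coinitial, at $p$ with respect to $\leq$, because each set $\{y:c<_\sigma y\}$ (resp.\ $\{y:y<_\sigma d\}$) with $c<_\sigma p<_\sigma d$ is an open neighbourhood of $p$. This already exhibits $\{p\}$ as an intersection of countably many open $\leq$-intervals about $p$; when $p$ is approached from only one side one also uses that the corresponding half-line $\{y:y\leq_\sigma p\}$ or $\{y:y\geq_\sigma p\}$ is then clopen. What remains is to replace these open interval neighbourhoods of $p$ by clopen ones.

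This last step is the main obstacle. Total disconnectedness (equivalently, total separatedness) produces, for each single point $c<_\sigma p$, a clopen set containing $p$ and missing $c$; but what is needed is a clopen set containing $p$ and missing the whole closed initial segment $\{y:y\leq_\sigma c\}$, and clopen separation of a point from a closed set is not a formal consequence of total separatedness. Overcoming this is where the full hypothesis $\sel[\mathscr{F}(X)]\neq\emptyset$ --- not merely the existence of a continuous weak selection --- must be used essentially: for instance, by showing that $X$ is in fact zero-dimensional, or by using $g$ on the closed sets $\{y:y\leq_\sigma c\}$ and $\{y:y\geq_\sigma d\}$ to manufacture clopen ``tails'' through $p$ that are cofinal in the neighbourhood filter of $p$. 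Once clopen neighbourhoods are available, intersecting the countable clopen families coming from the two sides (together with the clopen half $\{y:y\leq_\sigma p\}$ or $\{y:y\geq_\sigma p\}$ in the one-sided case) produces a countable family of clopen sets whose intersection is exactly $\{p\}$, which proves the Claim and hence the theorem.
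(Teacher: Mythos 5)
Your overall reduction is sound and is essentially the same as the paper's: pass to a clopen set $D$ (resp.\ $Y$) containing $p$ and avoiding a second point $q$, observe that $\sel[\mathscr{F}(D)]\neq\emptyset$ under either hypothesis, and then feed Proposition~\ref{proposition-Sel-Cyclic-v7:1} by showing that $p$ is a countable intersection of clopen sets. Your use of countable tightness to produce a countable set approaching $p$ from each side, and hence to write $\{p\}$ as a countable intersection of open $\leq_\sigma$-intervals, also matches the paper (which cites \cite[Theorem 4.1]{garcia-ferreira-gutev-nogura-sanchis-tomita:99} and \cite[Remark 3.5]{MR3478342} for the precise statement $\bigcap_{x\in A}(x,p]_{\leq_f}=\{p\}$).

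However, there is a genuine gap exactly where you say ``this last step is the main obstacle'': the upgrade from countably many open order-interval neighbourhoods of $p$ to countably many \emph{clopen} neighbourhoods is never carried out. You correctly note that total disconnectedness only separates $p$ from single points by clopen sets, not from the closed initial segments $\{y: y\leq_\sigma c\}$, and then you offer only candidate strategies (``for instance, by showing that $X$ is in fact zero-dimensional, or by using $g$ \dots to manufacture clopen tails''), none of which is executed. This is not a routine detail; it is the technical heart of the theorem, and the paper disposes of it by invoking a specific known result, \cite[Proposition 5.6]{gutev-nogura:09a}: once $\{p\}=\bigcap_{x\in A}(x,p]_{\leq_f}$ for a countable $A$, total disconnectedness together with the \emph{continuous weak selection} already yields that $p$ is a countable intersection of clopen subsets of $(\gets,p]_{\leq_f}$ (and symmetrically of $[p,\to)_{\leq_f}$). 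In particular, your suggestion that the full hyperspace selection $g\in\sel[\mathscr{F}(X)]$ must be used ``essentially'' at this point is off the mark --- the weak selection suffices --- but more importantly, without either proving this clopen-ification or citing a result that does, the proof of your Claim, and hence of the theorem, is incomplete.
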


\begin{proof}
  Take points $p,q\in X$ and a clopen set $Y\subset X$ with $p\in Y$
  and $q\notin Y$. If $\mathscr{F}(X)$ has a continuous selection,
  then so does $\mathscr{F}(Y)$ because $Y\in
  \mathscr{F}(X)$. Similarly, $\mathscr{F}(Y)$ has a continuous
  selection provided so does $\mathscr{F}(X\setminus\{q\})$. The proof
  now consists of showing that if
  $\sel[\mathscr{F}(Y)]\neq \emptyset$, then
  $\sel[\mathscr{F}(X)]\neq \emptyset$ precisely when
  $\sel[\mathscr{F}(X\setminus\{p\})]\neq \emptyset$. To this end, by
  Proposition \ref{proposition-Sel-Cyclic-v7:1}, it suffices to show
  that $p$ is a countable intersection of clopen sets of $Y$. So, take
  a selection $f\in \sel[\mathscr{F}(Y)]$. Then the above property is
  reduced to show that $p$ is a countable intersection of relatively
  clopen sets in each one of the $\leq_f$-intervals
  \[
    (\gets,p]_{\leq_f}=\{y\in Y: y\leq_f p\}\quad \text{and}\quad
    [p,\to)_{\leq_f}=\{y\in Y:p\leq_f y\}.
  \]
  If $p$ is a non-isolated point of $(\gets,p]_{\leq_f}$, using that
  $X$ is of countable tightness, there is a countable set
  $A\subset (\gets,p)_{\leq_f}$ with $p\in\overline{A}$. Therefore,
  $\bigcap_{x\in A}(x,p]_{\leq_f}=\{p\}$, see \cite[Theorem
  4.1]{garcia-ferreira-gutev-nogura-sanchis-tomita:99} and
  \cite[Remark 3.5]{MR3478342}. According to \cite[Proposition
  5.6]{gutev-nogura:09a}, this implies that $p$ is a countable
  intersection of clopen subsets of $(\gets,p]_{\leq_f}$ because $X$
  is totally disconnected. Similarly, $p$ is also a countable
  intersection of clopen subsets of $[p,\to)_{\leq_f}$. The proof is
  complete.
\end{proof}

According to \cite[Corollary 5.3]{MR3712970}, a space $X$ is totally
disconnected provided $\mathscr{F}(Z)$ has a continuous selection for
every $Z\subset X$ with $|X\setminus Z| \leq 2$. Applying twice
Theorem \ref{theorem-Sel-Cyclic-v7:1}, this gives the following
immediate consequence.

\begin{corollary}
  \label{corollary-Sel-Cyclic-v12:1}
  For a space $X$ which is of countable tightness, the following
  conditions are equivalent\textup{:}
  \begin{enumerate}
  \item\label{item:Sel-Cyclic-v12:1} $X$ is totally disconnected and
    $\sel[\mathscr{F}(X)]\neq\emptyset$.
  \item\label{item:Sel-Cyclic-v12:2}
    $\sel[\mathscr{F}(X\setminus S)]\neq \emptyset$, for every
    $S\in \mathscr{F}_2(X)$.
  \end{enumerate}
\end{corollary}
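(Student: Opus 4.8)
The plan is to read the corollary off Theorem~\ref{theorem-Sel-Cyclic-v7:1} together with \cite[Corollary~5.3]{MR3712970}, applying the former once in the direction \ref{item:Sel-Cyclic-v12:2}$\implies$\ref{item:Sel-Cyclic-v12:1} and twice in the direction \ref{item:Sel-Cyclic-v12:1}$\implies$\ref{item:Sel-Cyclic-v12:2}, where the (at most) two points of $S$ are deleted one at a time. I will use two elementary hereditary facts without comment: an arbitrary subspace of a totally disconnected space is totally disconnected (intersect a family of clopen sets isolating a point with the subspace), and an arbitrary subspace of a space of countable tightness is of countable tightness. In particular, for every $q\in X$ the subspace $X\setminus\{q\}$ is again totally disconnected and of countable tightness.

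For \ref{item:Sel-Cyclic-v12:1}$\implies$\ref{item:Sel-Cyclic-v12:2}, assume $X$ is totally disconnected with $\sel[\mathscr{F}(X)]\neq\emptyset$, and let $S\in\mathscr{F}_2(X)$; since $S$ is nonempty, $|S|\in\{1,2\}$. By Theorem~\ref{theorem-Sel-Cyclic-v7:1} applied to $X$ we get $\sel[\mathscr{F}(X\setminus\{q\})]\neq\emptyset$ for every $q\in X$, which disposes of the case $|S|=1$. If $S=\{p,q\}$ with $p\neq q$, put $Y=X\setminus\{q\}$; then $Y$ is totally disconnected, of countable tightness, and $\sel[\mathscr{F}(Y)]\neq\emptyset$ by the previous sentence, so a second application of Theorem~\ref{theorem-Sel-Cyclic-v7:1}, this time to $Y$, gives $\sel[\mathscr{F}(Y\setminus\{p\})]\neq\emptyset$; and $Y\setminus\{p\}=X\setminus S$.

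For \ref{item:Sel-Cyclic-v12:2}$\implies$\ref{item:Sel-Cyclic-v12:1}, observe that the sets $X\setminus S$ with $S\in\mathscr{F}_2(X)$ are exactly the $Z\subset X$ with $1\le|X\setminus Z|\le 2$; thus \ref{item:Sel-Cyclic-v12:2} furnishes a continuous selection for $\mathscr{F}(Z)$ for all such $Z$, and \cite[Corollary~5.3]{MR3712970} yields that $X$ is totally disconnected. It remains to produce a selection for $\mathscr{F}(X)$ itself: fix any $p\in X$; by \ref{item:Sel-Cyclic-v12:2} with $S=\{p\}$ we have $\sel[\mathscr{F}(X\setminus\{p\})]\neq\emptyset$, and since $X$ is now known to be totally disconnected and of countable tightness, Theorem~\ref{theorem-Sel-Cyclic-v7:1} (in the direction ``$\sel[\mathscr{F}(X\setminus\{p\})]\neq\emptyset\implies\sel[\mathscr{F}(X)]\neq\emptyset$'') gives $\sel[\mathscr{F}(X)]\neq\emptyset$.

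There is essentially no obstacle here beyond bookkeeping; the only delicate point is that a two-point deletion must be realised as two successive one-point deletions, which forces the second use of Theorem~\ref{theorem-Sel-Cyclic-v7:1} onto the subspace $X\setminus\{q\}$ rather than onto $X$ --- hence the need for total disconnectedness and countable tightness to be inherited by subspaces. (One should also note that \cite[Corollary~5.3]{MR3712970} is only applied to sets $Z$ with $X\setminus Z$ of size $1$ or $2$, which is exactly what condition \ref{item:Sel-Cyclic-v12:2} provides.)
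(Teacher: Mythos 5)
Your proof is correct and follows essentially the same route as the paper: total disconnectedness in the direction \ref{item:Sel-Cyclic-v12:2}$\implies$\ref{item:Sel-Cyclic-v12:1} comes from \cite[Corollary 5.3]{MR3712970}, and Theorem \ref{theorem-Sel-Cyclic-v7:1} is applied twice (to $X$ and then to the hereditarily totally disconnected, countably tight subspace $X\setminus\{q\}$) to handle the two-point deletions, plus once more to recover $\sel[\mathscr{F}(X)]\neq\emptyset$. The only cosmetic point is that the backward use of Theorem \ref{theorem-Sel-Cyclic-v7:1} needs the hypothesis $\sel[\mathscr{F}(X\setminus\{p\})]\neq\emptyset$ for \emph{all} $p$ (not just one fixed $p$), which condition \ref{item:Sel-Cyclic-v12:2} does supply, so nothing is lost.
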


Evidently, by a finite induction, \ref{item:Sel-Cyclic-v12:2} of
Corollary \ref{corollary-Sel-Cyclic-v12:1} can be extended to all
nonempty finite subsets $S\subset X$. Regarding this, let us remark
that if $X$ is a regular space such that $\mathscr{F}(Z)$ has a
continuous selection for every nonempty open $Z\subset X$, then $X$
has a clopen $\pi$-base \cite[Theorem 5.4]{MR3712970}. Here, a family
$\mathscr{P}$ of nonempty open subsets of $X$ is a \emph{$\pi$-base}
for $X$, or a \emph{pseudobase}, if each nonempty open subset of $X$
contains an element of $\mathscr{P}$.\medskip

The hypothesis in Theorem \ref{theorem-sa_points-v10:1} that $X$ is
connected is essential to conclude that $X$ is compact. Namely, each
completely metrizable space $X$ which has a covering dimension zero,
i.e.\ being \emph{strongly zero-dimensional}, has a continuous
selection for $\mathscr{F}(X)$
\cite{choban:70a,engelking-heath-michael:68}. Hence, by Theorem
\ref{theorem-Sel-Cyclic-v7:1}, if $X$ is a strongly zero-dimensional
completely metrizable space, then
$\sel[\mathscr{F}(X\setminus\{p\})]\neq \emptyset$ for every $p\in
X$. In fact, Theorem \ref{theorem-Sel-Cyclic-v7:1} is not so relevant
in this case. It is well known that complete metrizability is
inherited on $G_\delta$-sets. Moreover, such sets remain strongly
zero-dimensional provided so is $X$. Thus, in the setting of a
strongly zero-dimensional completely metrizable space $X$, the
hyperspace $\mathscr{F}(Z)$ has a continuous selection for every
nonempty $G_\delta$-subset $Z\subset X$. Based on this, the following
question was posed in \cite[Question 5]{MR3712970}, it is still
open.

\begin{question}[\cite{MR3712970}]
  \label{question-Sel-Cyclic-v12:1}
  Let $X$ be a (completely) metrizable space with the property that
  $\mathscr{F}(Z)$ has a continuous selection for every nonempty
  $G_\delta$-subset $Z\subset X$. Then, is it true that $X$ is
  strongly zero-dimensional?
\end{question}

Going back to the selection property that
``$\sel[\mathscr{F}(X\setminus\{p\})]\neq \emptyset$ for every point
${p\in X}$'', let us remark that it always implies the existence of a
continuous selection for the nonempty closed subsets of $X$.  Namely,
answering a question in a previous version of this paper, the
following observation was communicated to the author by Jorge Antonio
Cruz Chapital.

\begin{proposition}
  \label{proposition-Sel-Cyclic-vgg:1}
  Let $X$ be a space such that $\sel[\mathscr{F}(X\setminus\{p\})]\neq
  \emptyset$, for every $p\in X$. Then $\sel[\mathscr{F}(X)]\neq
  \emptyset$. 
\end{proposition}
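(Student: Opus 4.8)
First I would split according to whether $X$ is connected, since the connected case is already covered by Theorem \ref{theorem-sa_points-v10:1} and the genuinely new work is the disconnected case.

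Suppose $X$ is disconnected, say $X = A\cup B$ with $A,B$ disjoint, nonempty and clopen. The plan is to produce continuous selections for $\mathscr{F}(A)$ and $\mathscr{F}(B)$ from the hypothesis and then paste them. For the first point: picking any $p\in B$, the set $A$ is clopen in $X\setminus\{p\}$, in particular closed there, so $\mathscr{F}(A)=\{S\in\mathscr{F}(X\setminus\{p\}):S\subset A\}$ and its Vietoris topology agrees with the subspace topology inherited from $(\mathscr{F}(X\setminus\{p\}),\tau_V)$; restricting any $f\in\sel[\mathscr{F}(X\setminus\{p\})]$ to $\mathscr{F}(A)$ yields a member of $\sel[\mathscr{F}(A)]$, and symmetrically, choosing a point of $A$, one gets $\sel[\mathscr{F}(B)]\neq\emptyset$. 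Fix $f_A\in\sel[\mathscr{F}(A)]$ and $f_B\in\sel[\mathscr{F}(B)]$. Now the Vietoris-open sets $\langle\{A,X\}\rangle=\{S\in\mathscr{F}(X):S\cap A\neq\emptyset\}$ and $\langle\{B\}\rangle=\{S\in\mathscr{F}(X):S\subset B\}$ form a clopen partition of $\mathscr{F}(X)$; the assignment $S\mapsto S\cap A$ is Vietoris continuous from $\langle\{A,X\}\rangle$ into $\mathscr{F}(A)$ because $A$ is clopen; and $\langle\{B\}\rangle$ is canonically identified with $\mathscr{F}(B)$. Hence $g(S)=f_A(S\cap A)$ for $S$ with $S\cap A\neq\emptyset$, and $g(S)=f_B(S)$ for $S\subset B$, defines an element of $\sel[\mathscr{F}(X)]$.

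If $X$ is connected, then the hypothesis is precisely the right-hand condition in Theorem \ref{theorem-sa_points-v10:1}, so $X$ is compact and orderable; then $\sel[\mathscr{F}(X)]\neq\emptyset$ — for instance by Theorem \ref{theorem-Sel-Cyclic-v4:2}, or by Theorem \ref{theorem-Sel-Cyclic-v4:1} applied to a compatible linear order $\leq$ on $X$, noting that $\min_{\leq}S$ exists for every $S\in\mathscr{F}(X)$ because $X$ is compact. The only step requiring care is the pasting in the disconnected case — checking that $S\mapsto S\cap A$ is Vietoris continuous on $\langle\{A,X\}\rangle$ and that the two pieces are clopen in $\mathscr{F}(X)$ — but this is a routine verification with basic Vietoris neighbourhoods, turning on the clopenness of $A$ and $B$; all the substantive content is absorbed into Theorem \ref{theorem-sa_points-v10:1}.
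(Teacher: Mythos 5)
Your proposal is correct and follows essentially the same route as the paper: the connected case is delegated to Theorem \ref{theorem-sa_points-v10:1}, and in the disconnected case one restricts selections from $\mathscr{F}(X\setminus\{p\})$ and $\mathscr{F}(X\setminus\{q\})$ to the clopen pieces $A$ and $B$ and pastes them via $S\mapsto S\cap A$. The only difference is that you spell out the pasting and its Vietoris continuity, which the paper treats as a standard fact.
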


\begin{proof}
  If $X$ is connected, this follows from Theorem
  \ref{theorem-sa_points-v10:1}. Otherwise, if $X$ is not connected,
  then it has a nonempty clopen proper subset $A\subset X$. Taking
  points $p\in A$ and $q\in B=X\setminus A$, it follows that
  $A\in \mathscr{F}(X\setminus\{q\})$ and
  $B\in \mathscr{F}(X\setminus\{p\})$, therefore
  $\sel[\mathscr{F}(A)]\neq \emptyset$ and
  $\sel[\mathscr{F}(B)]\neq \emptyset$. Since the sets $A$ and $B$
  form a clopen partition of $X$, we also have that
  $\sel[\mathscr{F}(X)]\neq \emptyset$.
\end{proof}

The condition in Proposition \ref{proposition-Sel-Cyclic-vgg:1} that
$\mathscr{F}(X\setminus\{p\})$ has a continuous selection for every
$p\in X$ is important. Indeed, one can easily construct examples of
compact connected metrizable spaces which have a continuous selection
for $\mathscr{F}(X\setminus\{p\})$ for some point $p\in X$, but
$\sel[\mathscr{F}(X)]=\emptyset$. For instance, take any simple triod
$X$, i.e.\ the union of 3 arcs having a common endpoint $p\in X$ and
being mutually disjoint except at that point.

\section{Weak Selections Avoiding Points}
\label{sec:weak-select-avoid}

In this section, we will prove Theorem \ref{theorem-Sel-Cyclic-v8:1},
which is based on known results and the following special case of this
theorem (compare with \cite[Theorem 3.11]{MR0339099}).

\begin{lemma}
  \label{lemma-Sel-Cyclic-v12:1}
  Let $X$ be a connected space such that
  $\sel[\mathscr{F}_2(X\setminus\{p\})]\neq \emptyset$, for every
  $p\in X$. If $X$ has a cut point, then it is weakly orderable. 
\end{lemma}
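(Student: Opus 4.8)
The plan is to reduce the statement to the ``separation'' characterisation of weak orderability for connected spaces already quoted in the proof of Proposition \ref{proposition-Sel-Cyclic-vgg:2}: a connected space is weakly orderable if and only if among every three of its points there is one which separates the other two. So the heart of the argument will be to take three arbitrary points $x,y,z\in X$ and show one of them separates the other two. The hypothesis supplies, for each $p\in X$, a continuous weak selection on $X\setminus\{p\}$; since $X\setminus\{p\}$ is connected whenever $p\in\noncut(X)$, for such $p$ Theorem \ref{theorem-Sel-Cyclic-v3:1}(a) tells us $X\setminus\{p\}$ is weakly orderable with respect to $\leq_f$ for the chosen selection $f$, hence among any three points of $X\setminus\{p\}$ one separates the other two \emph{inside} $X\setminus\{p\}$. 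The first task is therefore to promote ``separates inside $X\setminus\{p\}$'' to ``separates inside $X$''; this should be routine because a $p'$-cut of $X\setminus\{p\}$ together with the point $p$ lying in the closure of one side (which is forced when $X\setminus\{p,p'\}$ fails to be connected) reassembles into a $p'$-cut of $X$ — one just has to check that adding $p$ back to one of the two pieces keeps the decomposition a cut, using that $\{p\}$ is the common boundary of the two sides of a $p$-cut of $X$.

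Next I would deal with the role of the hypothesised cut point. Fix $c\in\cut(X)$ and a $c$-cut $(U,V)$ of $X$. For three points $x,y,z$, if $c$ is one of them, say $z=c$, then either $c$ separates $x$ and $y$ (if they lie on opposite sides of the $c$-cut) or both lie in, say, $\overline U$, which is a proper closed connected subset of $X$; since $\overline U\in\mathscr F(X\setminus\{q\})$ for any $q\in V\subset\cut(X)\cap V$, the hyperspace $\mathscr F(\overline U)$ inherits a continuous selection, so by Theorem \ref{theorem-Sel-Cyclic-v3:1} $\overline U$ is weakly orderable and among $x,y$ and any third point of $\overline U$ one separates the other two in $\overline U$; I then lift this separation back to $X$ as above. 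The remaining case is that none of $x,y,z$ equals $c$ and (after possibly using the $c$-cut to settle the matter directly) they all lie on one side, say in $\overline U$; the same weak-orderability-of-$\overline U$ argument applies. The only genuinely new case is when $x,y,z$ all avoid every cut point, i.e.\ all three are noncut points — but then pick $p=x\in\noncut(X)$, apply Theorem \ref{theorem-Sel-Cyclic-v3:1}(a) to the connected space $X\setminus\{p\}$ to get weak orderability there, observe $y,z$ together with any fourth point witness a separation in $X\setminus\{p\}$, and lift. Actually the cleanest organisation is: given any three points, find \emph{some} point $p\in X$ distinct from all three with $p\in\noncut(X)$ — which is possible because $\noncut(X)$ is infinite here (a connected weakly orderable space such as $\overline U$ with a selection has at most two noncut points, so were $\noncut(X)$ finite one derives a contradiction with the cut point hypothesis much as in the proof of Theorem \ref{theorem-Sel-Cyclic-v2:1}) or, failing that, handle the low-cardinality case by hand — and then work inside the weakly orderable space $X\setminus\{p\}$.

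The main obstacle I anticipate is the bookkeeping in the lifting step: showing that separation \emph{relative to} $X\setminus\{p\}$ (or relative to a closed connected piece $\overline U$) implies separation \emph{in} $X$. One has to verify that if $(U',V')$ is a $p'$-cut of the subspace $Y=X\setminus\{p\}$ with $x\in U'$, $y\in V'$, then $(U'\cup\{p\},V')$ or $(U',V'\cup\{p\})$ — whichever keeps the closures touching only at $p'$ — is a $p'$-cut of $X$, and that $p$ cannot fall on the ``wrong'' side in a way that destroys this; here connectivity of $X$ and the fact that $X\setminus\{p'\}=U'\cup V'\cup(\{p\}\setminus\{p'\})$ do the work, but the details require care, particularly when $p'\in\{x,y\}$ is disallowed (it is, since a point cannot separate a pair containing itself) and when $p=p'$, which simply cannot happen since $p'\in Y$. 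Once the lifting lemma is in place, the three-point condition follows by the case analysis above, and the Zaremba–Szczepkowicz/Kok criterion (\cite[Theorem 4.1]{MR0339099}) delivers weak orderability of $X$.
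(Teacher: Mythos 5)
Your overall plan (verify Kok's three-point separation criterion) is reasonable, but two steps in it are genuinely broken. First, the general ``lifting'' lemma you lean on is false: a point $p'$ can separate two points in $X\setminus\{p\}$, with $p\in\noncut(X)$, without separating them in $X$. The circle is the standard counterexample: for $p'\neq p$, $p'$ cuts the arc $X\setminus\{p\}$, yet no point cuts $X$; there $p$ lies in the closure of \emph{both} sides of the $p'$-cut of $X\setminus\{p\}$ (even though $X\setminus\{p,p'\}$ is disconnected, contrary to your parenthetical claim), so neither re-assembly $(U'\cup\{p\},V')$ nor $(U',V'\cup\{p\})$ is a $p'$-cut of $X$. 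The only sound instance of lifting is the special one where the separation takes place inside $\overline{U}=U\cup\{c\}$ for a $c$-cut $(U,V)$ and $V$ is attached to the side containing $c$, since $\overline{V}\subset V\cup\{c\}$. Second, the case analysis misses the decisive configurations. If the triple is split by the $c$-cut, say $x,y\in U$ and $z\in V$, then ``$c$ separates'' is irrelevant (the criterion needs one of $x,y,z$ to separate), and your argument inside $\overline{U}$ succeeds only when the separator of $\{x,y,c\}$ in $\overline{U}$ is $x$ or $y$; it collapses exactly when it is $c$, i.e.\ when $c$ lies order-between two points of $U$. Likewise, if all three points are noncut points of $X$, none of them can separate anything, so one must first prove that $X$ has at most two noncut points; your fourth point $w$ cannot witness the criterion for the triple $\{x,y,z\}$, and the claim that $\noncut(X)$ is infinite is false (consider $[0,1]$). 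A small further slip: the hypothesis gives only weak selections, so $\overline{U}$ inherits an element of $\sel[\mathscr{F}_2(\overline{U})]$, not of $\sel[\mathscr{F}(\overline{U})]$; weak orderability of $\overline{U}$ still follows from Theorem \ref{theorem-Sel-Cyclic-v3:1}.

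What is missing is exactly the structural fact the paper proves: for a $q$-cut $(U,V)$, the point $q$ is a \emph{noncut} point (an order endpoint) of each of $C=U\cup\{q\}$ and $D=V\cup\{q\}$. The paper gets this by contradiction: if $q$ were a cut point of $C$, one picks $r\in U$ and an $r$-cut $(A,B)$ of $C$ with $q\in A$ and $a<_A q<_A b$, then compares, via Eilenberg's uniqueness of compatible orders (Theorem \ref{theorem-Sel-Cyclic-v10:1}), the order of $A$ with that of the connected set $Y=A\cup V\subset X\setminus\{r\}$, concluding that the interval $(a,b)$ is open in $Y$ and disjoint from $\overline{V}\ni q$ --- a contradiction. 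Once $q$ is an endpoint of both $C$ and $D$, the two orders glue at $q$ to a compatible order on $X$, with no need for the three-point criterion. Your proposal contains no substitute for this step, and it is precisely where the hypothesis that $X\setminus\{p\}$ has a continuous weak selection for \emph{every} $p$ (not only for noncut $p$) does its real work; without it, the criterion simply cannot be verified in the cases above.
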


\begin{proof}
  Suppose that $X$ has a cut point $q\in X$, and take a $q$-cut
  $(U,V)$ of $X$. Then $C=U\cup\{q\}$ is a connected set. Hence, by
  Theorem \ref{theorem-Sel-Cyclic-v3:1}, it is weakly orderable
  because $C\subset X\setminus\{p\}$ for every (some) point $p\in V$,
  and $X\setminus\{p\}$ has a continuous weak selection.  We are going
  to show that $q$ is a noncut point of $C$. To this end, suppose that
  $q$ is a cut point of $C$, and take another cut point $r\in U$ of
  $C=U\cup\{q\}$, also an $r$-cut $(A,B)$ of $C$ with $q\in A$. Since
  $C$ is weakly orderable, $A$ is connected and $q$ is a cut point of
  $A$ as well. Moreover, $A$ is also weakly orderable. Let $\leq_A$ be
  a compatible linear order on $A$ and $a,b\in A$ be such that
  \begin{equation}
    \label{eq:Sel-Cyclic-v9:1}
    a<_A q<_A b.
  \end{equation}
  Next, for convenience, set $E=(a,b)_{\leq_A}\subset A$, see
  \eqref{eq:Sel-Cyclic-vgg:1}, which is a connected set being an
  interval, see \cite[Theorem 1.3]{MR0339099}. Finally, let
  \[
    D=V\cup\{q\}\subset X\setminus\{r\}\quad \text{and}\quad Y=A\cup
    V=A\cup D.
  \]
  Then $Y$ is a connected subset of $X\setminus\{r\}$ because
  $q\in A\cap D$ and $D$ is connected.  Hence, for the same reason as
  before, $Y$ is weakly orderable. Since both $A$ and $Y$ are
  connected, by Theorem \ref{theorem-Sel-Cyclic-v10:1}, $Y$ has a
  compatible linear order $\leq$ with ${\leq\uhr A=\leq_A}$. We now
  have that $E=(a,b)_\leq=\{y\in Y: a<y< b\}$ because
  $y\in Y\setminus E$ implies that $E\subset (\gets,y)_\leq$ or
  $E\subset (y,\to)_\leq$, see e.g.\ \cite[Proposition
  2.6]{gutev:07a}. Thus, $E$ is an open subset of $Y$ and
  $V\subset Y\setminus E$, therefore
  $D=\overline{V}\subset Y\setminus E$. However, this is impossible
  because $q\in E\cap D$. A contradiction! \smallskip

  Evidently, the same reasoning applies to show that $q$ is also a
  noncut point of $D$. Since $C$ and $D$ are weakly orderable, so is
  the space $X=C\cup D$.
\end{proof}

\begin{proof}[Proof of Theorem \ref{theorem-Sel-Cyclic-v8:1}]
  If $X$ is weakly cyclically orderable and $p\in X$, then
  $X\setminus\{p\}$ is weakly orderable \cite[Proposition
  1.7]{MR0339099}. Accordingly, $X\setminus \{p\}$ has a continuous
  weak selection. Conversely, suppose that $X\setminus \{p\}$ has a
  continuous weak selection, for each $p\in X$. To show that $X$ is
  weakly cyclically orderable, we distinguish the following two
  cases. If $\noncut(X)=X$, take any point $p\in X$ and a nonempty
  connected subset $Y\subset X\setminus\{p\}$. By hypothesis,
  $X\setminus\{p\}$ has a continuous weak selections, hence so does
  $Y$. Accordingly, by Theorem \ref{theorem-Sel-Cyclic-v3:1}, $Y$ is
  weakly orderable which implies that it has at most two noncut
  points, see e.g.\ \cite[Theorem 3.5]{MR0339099}. Therefore, by
  \cite[Theorem 3.18]{MR0339099}, $X$ is weakly cyclically
  orderable. If $X$ has a cut point, it follows from Lemma
  \ref{lemma-Sel-Cyclic-v12:1} that $X$ is weakly orderable, hence it
  is weakly cyclically orderable as well \cite[Proposition
  1.6]{MR0339099}. 
\end{proof}

We conclude this paper with the following consequence of Theorem
\ref{theorem-Sel-Cyclic-v8:1}, which provides a natural generalisation
of \cite[Proposition 1.7]{MR0339099}.

\begin{corollary}
  \label{corollary-Sel-Cyclic-v12:2}
  A connected space $X$ is weakly cyclically orderable if and only if
  $X\setminus\{p\}$ is weakly orderable, for every $p\in X$.
\end{corollary}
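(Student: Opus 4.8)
The plan is to read this off directly from Theorem \ref{theorem-Sel-Cyclic-v8:1}. For the forward implication there is nothing to do beyond quoting the literature: if $X$ is weakly cyclically orderable via a cyclic order $\mathbf{C}$ and $p\in X$, then $X\setminus\{p\}$ is weakly orderable with respect to the linear order $<_{\mathbf{C},p}$ by \cite[Proposition 1.7]{MR0339099}, irrespective of whether $p$ is a cut or a noncut point of $X$.

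For the converse, assume $X\setminus\{p\}$ is weakly orderable for every $p\in X$. The only auxiliary fact I would invoke is the classical observation of Michael \cite{michael:51} that a weakly orderable space admits a continuous weak selection: if $\leq$ is a compatible linear order on a space $Y$, then $\sigma(\{x,y\})=\min_{\leq}\{x,y\}$ satisfies the continuity criterion of \cite[Theorem 3.1]{gutev-nogura:01a}. Indeed, given $x<_\sigma y$, i.e.\ $x<y$, one separates $x$ and $y$ by $\leq$-open intervals: if $(x,y)_{\leq}\neq\emptyset$ pick $z$ strictly between them and use $(\gets,z)_{\leq}$ and $(z,\to)_{\leq}$; if $(x,y)_{\leq}=\emptyset$ use $(\gets,y)_{\leq}$ and $(x,\to)_{\leq}$, noting that then any $s$ with $s<y$ satisfies $s\leq x$ and any $t$ with $x<t$ satisfies $y\leq t$. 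Applying this to $Y=X\setminus\{p\}$ for each $p$, the hypothesis yields that $X\setminus\{p\}$ has a continuous weak selection for every $p\in X$, and Theorem \ref{theorem-Sel-Cyclic-v8:1} then gives that $X$ is weakly cyclically orderable.

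I do not expect any genuine obstacle here: the substance is entirely carried by Theorem \ref{theorem-Sel-Cyclic-v8:1}, and the passage from weak orderability to the existence of a continuous weak selection is elementary and already used implicitly in the Introduction. The corollary should therefore amount to little more than assembling these two ingredients, and it makes explicit that the necessary condition in \cite[Proposition 1.7]{MR0339099} is, for connected spaces, also sufficient.
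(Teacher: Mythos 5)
Your proposal is correct and follows essentially the same route as the paper: the forward direction is \cite[Proposition 1.7]{MR0339099}, and the converse passes from weak orderability of each $X\setminus\{p\}$ to a continuous weak selection (via the $\min_\leq$-selection) and then applies Theorem \ref{theorem-Sel-Cyclic-v8:1}. The only difference is that you spell out the elementary verification that a compatible linear order yields a continuous weak selection, which the paper leaves implicit.
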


\begin{proof}
  If $X$ is weakly cyclically orderable, then by \cite[Proposition
  1.7]{MR0339099}, $X\setminus\{p\}$ is weakly orderable for each
  point $p\in X$. If $X\setminus\{p\}$ is weakly orderable for each
  point $p\in X$, then each $X\setminus\{p\}$, $p\in X$, has a
  continuous weak selection. Hence, by Theorem
  \ref{theorem-Sel-Cyclic-v8:1}, $X$ is weakly cyclically orderable. 
\end{proof}

\end{document}